\documentclass[12pt]{amsart}

\usepackage{setspace}
\usepackage{amsmath, amssymb,amsthm, amsfonts, accents}
\newtheorem{thm}{Theorem}
\newtheorem{lemma}{Lemma}
\newtheorem{prop}{Proposition}
\newtheorem{cor}{Corollary}

\theoremstyle{definition}
\newtheorem{definition}{Definition}

\theoremstyle{remark}
\newtheorem{remark}{Remark}

\numberwithin{equation}{section}

\newcommand{\Ric}{\mbox{Ric}}
\newcommand{\R}{\mathbb R}

\newcommand{\be}{\begin{equation}}
\newcommand{\ee}{\end{equation}}
\newcommand{\bee}{\begin{equation*}}
\newcommand{\eee}{\end{equation*}}

\def\p{\partial}

\def\lf{\left}
\def\ri{\right}
\def\Pi{\displaystyle{\mathbb{II}}}

\def\m{\mathfrak{m}}

\def\tH{\tilde{H}}
\def\l{\lambda}
\def\e{\epsilon}

\def\tg{\tilde{g}}
\def\iM{\accentset{\circ}{M}}

\begin{document}

\title{Extensions and fill-ins with nonnegative scalar curvature}

\author{Jeffrey L. Jauregui}
\address[Jeffrey L. Jauregui]{Department of Mathematics, University of Pennsylvania, Philadelphia, PA 19104, USA.}
\email{jauregui@math.upenn.edu}

\author{Pengzi Miao}
\address[Pengzi Miao]{
Department of Mathematics, University of Miami, Coral Gables, FL 33146, USA.}
\email{pengzim@math.miami.edu}

\author{Luen-Fai Tam$^1$}
\address[Luen-Fai Tam]{The Institute of Mathematical Sciences and Department of
 Mathematics, The Chinese University of Hong Kong,
Shatin, Hong Kong, China.}
\email{lftam@math.cuhk.edu.hk}

\thanks{$^1$Research partially supported by Hong Kong RGC General Research Fund  \#CUHK 403011}

\renewcommand{\subjclassname}{
  \textup{2010} Mathematics Subject Classification}
\subjclass[2010]{Primary 53C20; Secondary 83C99
}

\date{}

\begin{abstract}
Motivated by the quasi-local mass problem in general
relativity, we apply the asymptotically flat extensions, constructed by
Shi and Tam in the proof of the positivity of the Brown--York mass, 
to study a fill-in problem of realizing geometric data on a 2-sphere as the boundary
of a compact 3-manifold of nonnegative scalar curvature.  We characterize
the relationship between two borderline cases: one in which the Shi--Tam
extension has zero total mass, and another in which fill-ins of
nonnegative scalar curvature fail to exist. Additionally, we prove a type
of positive mass theorem in the latter case.
\end{abstract}

\maketitle

\section{Introduction}

The boundary geometry of compact Riemannian $3$-manifolds of  nonnegative scalar curvature
plays an important role in the quasi-local mass problem in general relativity.

Given a spacetime $N$ satisfying the dominant energy condition, and a bounded, space-like, time-symmetric (i.e., totally geodesic) hypersurface  $\Omega$ in $N$, the Riemannian manifold $\Omega$  necessarily has nonnegative scalar curvature.
The quasi-local mass of $\Omega$ in $N$ is
expected to depend only on the geometry of the boundary 2-surface $\Sigma = \partial \Omega$: in particular,
the induced metric $\gamma$  on $ \Sigma$ and the mean curvature $H$ of $ \Sigma $ in $\Omega$, in the outward direction.
When apparent horizons of black holes are present in $\Omega$, one assumes $ \p \Omega \setminus  \Sigma $
is not empty and consists of minimal surfaces.

In \cite{Bartnik89, Bartnik90}, Bartnik conjectured that,  given $\Omega$, $\Sigma$, $ \gamma $ and $ H$ in the above setting,
there exists  an asymptotically flat, static vacuum extension
of $\Omega$ with the prescribed  boundary metric $\gamma$ and boundary mean curvature $H$.
Motivated by that conjecture,  the following definition was given   in   \cite{Jauregui11}
(all metrics and functions below are assumed to be smooth, unless otherwise stated):

\begin{definition}
A triple  $(\Sigma, \gamma, H)$, where $ \Sigma$ is a surface
that is topologically   a $2$-sphere, $\gamma$ is a metric
on $ \Sigma$ of positive Gaussian curvature, and $H$ is a positive function on $ \Sigma$,
is called \emph{Bartnik data}.
\end{definition}

For Bartnik data $(\Sigma, \gamma, H)$ to be physically meaningful, one wants to know
whether it indeed arises  as the  boundary data of some compact $3$-manifold of
nonnegative scalar curvature. We recall the following definition from \cite{Jauregui11}:

\begin{definition}
A  \emph{nonnegative scalar curvature fill-in} of Bartnik data
 $(\Sigma, \gamma, H)$ is
 a compact Riemannian $3$-manifold with boundary $(\Omega, g)$ of nonnegative scalar curvature such that:
\begin{itemize}
\item $(\Sigma, \gamma)$ is isometric to a connected  component of $\p \Omega$
and, under this isometry, $H$ equals the mean curvature of $ \Sigma$ in $ (\Omega, g)$, and
\item $ \p \Omega \setminus \Sigma$ is either empty or else is a (possibly disconnected) minimal surface.
\end{itemize}
\end{definition}

A necessary  condition for $(\Sigma, \gamma, H)$ to admit a nonnegative scalar curvature fill-in
 is provided by \cite[Theorem 1]{ShiTam02} (also known as the positivity of Brown--York mass \cite{BY1, BY2}).

\begin{thm} [Shi--Tam \cite{ShiTam02}] \label{thm-ShiTam}
Let $ \Omega$ be a compact Riemannian $3$-manifold with nonnegative scalar curvature with boundary $\Sigma$.
Suppose the induced metric $\gamma$ on $ \Sigma$ has positive Gaussian curvature and the mean curvature $H$
of $ \Sigma$ in $\Omega$ is positive,  then
\be \label{eq-BY}
 \int_\Sigma H  d v_\gamma    \le \int_\Sigma H_0 d v_\gamma
 \ee
where $H_0$ is the mean curvature of the embedding of $(\Sigma, \gamma)$ in $ \R^3$ and $ d v_\gamma$ is the area
form on $(\Sigma, \gamma)$. Moreover, equality in \eqref{eq-BY} holds if and only if $\Omega$ is isometric
to a domain in $ \R^3$.
\end{thm}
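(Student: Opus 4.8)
The plan is to follow the classical strategy of isometric embedding, parabolic extension, a monotonicity formula, and the positive mass theorem. Since $\gamma$ has positive Gaussian curvature and $\Sigma$ is a topological $2$-sphere, the Weyl embedding theorem (in the form established by Nirenberg and Pogorelov) provides an isometric embedding of $(\Sigma, \gamma)$ into $\R^3$ as a strictly convex surface $\Sigma_0$, unique up to rigid motion; its mean curvature is the function $H_0$ appearing in the statement. First I would flow $\Sigma_0$ outward along its unit normal to produce a foliation $\{\Sigma_r\}_{r \ge 0}$ of the exterior region, with induced metrics $\gamma_r$ and Euclidean mean curvatures $H_0(r)$. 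Since outward parallel surfaces of a convex body remain strictly convex, each leaf has positive Gaussian curvature, a fact that will be essential in the monotonicity step.

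Next I would build an asymptotically flat extension on the exterior region $\R^3 \setminus (\text{interior of } \Sigma_0)$ by considering metrics of the form $g = u^2 \, dr^2 + \gamma_r$, where $u > 0$ is to be determined and $u \equiv 1$ recovers the flat metric. Imposing $R(g) = 0$ reduces, via the Gauss and Codazzi equations along the foliation, to a single scalar \emph{parabolic} equation for $u$ over the leaves. I would solve this equation with the initial condition $u|_{\Sigma_0} = H_0/H$ --- chosen precisely so that the mean curvature of $\Sigma_0$ computed in $g$ equals the prescribed $H$ --- and with $u \to 1$ at spatial infinity. The analytic content here is long-time existence, positivity of $u$, and verification that $g$ is asymptotically flat with a finite ADM mass $m_{ADM}$.

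With the extension in hand, I would introduce the quasi-local quantity
\[
\mst(r) = \frac{1}{8\pi}\int_{\Sigma_r} H_0(r)\left(1 - \frac{1}{u}\right) dv_{\gamma_r},
\]
whose value at $r=0$ is exactly the Brown--York mass $\frac{1}{8\pi}\int_\Sigma (H_0 - H)\, dv_\gamma$, and whose limit as $r \to \infty$ is $m_{ADM}$. The crucial step is the monotonicity formula: differentiating $\mst(r)$, substituting $R(g)=0$, integrating by parts, and invoking the Gauss--Bonnet theorem on each leaf (here the positive Gaussian curvature of $\gamma_r$ is indispensable) should yield $\frac{d}{dr}\mst(r) \le 0$. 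I expect this curvature computation to be the principal obstacle, since it requires careful bookkeeping of the second fundamental forms of the leaves in both $g$ and the flat metric, and an integral inequality that is borderline but saved by convexity.

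Finally I would glue the original manifold $(\Omega, g)$ to the exterior extension along $\Sigma$. Because the mean curvatures from the two sides agree (both equal $H$), the resulting metric is Lipschitz across $\Sigma$ with no negative distributional scalar curvature, so a positive mass theorem for manifolds carrying such a corner gives $m_{ADM} \ge 0$. Chaining the inequalities,
\[
\frac{1}{8\pi}\int_\Sigma (H_0 - H)\, dv_\gamma = \mst(0) \ge \lim_{r\to\infty}\mst(r) = m_{ADM} \ge 0,
\]
which is exactly \eqref{eq-BY}. For the equality case, $\int_\Sigma H = \int_\Sigma H_0$ forces $\mst(0)=0$, hence $\mst \equiv 0$ and the vanishing of every nonnegative term in the monotonicity computation; this pins down $u \equiv 1$, so the exterior is isometric to the exterior of $\Sigma_0$ in $\R^3$ and $H = H_0$ on $\Sigma$. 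The rigidity clause of the positive mass theorem with corners then forces the glued manifold to be flat $\R^3$, whence $\Omega$ is isometric to the domain bounded by $\Sigma_0$. Establishing this corner version of the positive mass theorem, together with its rigidity statement, is the second substantial ingredient of the argument.
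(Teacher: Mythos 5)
Your proposal is correct and follows essentially the same route as the paper's source for this result: the paper does not reprove Theorem \ref{thm-ShiTam} but cites \cite{ShiTam02}, whose argument (Weyl embedding, quasi-spherical extension $u^2dr^2+g_r$ solving \eqref{eq-pde} with $u_0=H_0/H$, monotonicity of $\m(u_0;r)$ as in \eqref{eq-ShiTam-monotone}, and the corner positive mass theorem of \cite{Miao02, ShiTam02} with its rigidity) is exactly the strategy you outline, and whose machinery Section \ref{sec-mass} of the paper reproduces. The only small inaccuracy is that the monotonicity step needs no Gauss--Bonnet argument: formula \eqref{eq-ShiTam-monotone} is manifestly nonpositive pointwise because the leaves' scalar curvature $R_r$ is positive by convexity, which is the fact you correctly flag as indispensable.
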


\begin{remark} \label{rmk-ShiTam}
Though Theorem \ref{thm-ShiTam} is stated for a manifold $\Omega$ with $ \p \Omega = \Sigma$,
its conclusion still holds  for those $\Omega$ with  $ \p \Omega \setminus \Sigma  $ consisting of  minimal surfaces.
In that case one  replaces the use of the positive mass theorem \cite{SchoenYau79, Witten81}
 in the proof in \cite{ShiTam02} by the  Riemannian Penrose inequality \cite{Bray01, HI01}.
 Details of such an argument can be found in \cite[section 3.2]{Miao08}.
\end{remark}

In \cite{Jauregui11}, a unique  constant $ \lambda_0 $ is defined in association with any Bartnik data $(\Sigma, \gamma, H)$:
\be
\l_0 = \sup \Lambda
\ee
where

\vspace{.1cm}

\noindent $ \Lambda = \left\{ \lambda > 0 \ | \  (\Sigma, \gamma, \lambda H ) \right. $  {admits  a   nonnegative scalar  curvature} fill-in$\}$.

\vspace{.1cm}

\noindent It follows  immediately  from Theorem \ref{thm-ShiTam} and Remark \ref{thm-ShiTam} that
\be \label{eq-upper-bd-l}
\l_0   \le \frac{ \int_\Sigma H_0 d v_\gamma } {\int_\Sigma H d v_\gamma  } .
\ee
On the other hand,  it was proved  in \cite[Theorem 10]{Jauregui11} that $\lambda_0 > 0$ and
\be
 \Lambda = (0, \lambda_0 )  \ \mathrm{or} \
 (0, \lambda_0 ] ,
 \ee
and  conjectured  that  $ \l_0 \in \Lambda$ (\cite[Problem 2]{Jauregui11}).

\vspace{.3cm}

In this paper, we first give a more refined estimate of $\l_0$ than \eqref{eq-upper-bd-l}
(see Theorem \ref{thm-l-mu-0} in Section \ref{sec-apps}).
As a corollary, we  characterize Bartnik data $(\Sigma, \gamma, H)$ whose $\lambda_0 $ gives the equality in \eqref{eq-upper-bd-l}.

\begin{thm}  \label{thm-main}
Given Bartnik boundary data $(\Sigma, \gamma, H)$,
the constant $\l_0$ satisfies
\be  \label{eq-sup-l}
\lambda_0     = \frac{ \int_\Sigma H_0 dv_\gamma} {\int_\Sigma Hdv_\gamma }
 \ee
 if and only if $ H$ is a constant multiple of $H_0$.
 Here,  $H_0$ is the mean curvature of the isometric embedding of $(\Sigma, \gamma)$ in $ \R^3$.
\end{thm}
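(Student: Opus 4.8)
The plan is to prove the two implications separately; the backward (``if'') direction is elementary, while the forward (``only if'') direction carries all the content and rests on the rigidity in Theorem \ref{thm-ShiTam}.

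For the backward direction, suppose $H = c H_0$ for some constant $c > 0$. Since $\gamma$ has positive Gaussian curvature, Weyl's isometric embedding theorem produces a closed convex surface in $\R^3$, unique up to rigid motion, with induced metric $\gamma$; it bounds a compact convex body $\Omega_0$, which is flat and whose outward boundary mean curvature is exactly $H_0$. Thus $\Omega_0$ is a nonnegative scalar curvature fill-in of $(\Sigma,\gamma,H_0)$ with $\p\Omega_0\setminus\Sigma=\emptyset$. Since $H_0=\frac{1}{c}H$, this exhibits $\frac1c\in\Lambda$, so $\lambda_0\ge\frac1c$. On the other hand, \eqref{eq-upper-bd-l} gives $\lambda_0\le \frac{\int_\Sigma H_0\,dv_\gamma}{\int_\Sigma H\,dv_\gamma}=\frac1c$. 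Hence $\lambda_0=\frac1c=\frac{\int_\Sigma H_0\,dv_\gamma}{\int_\Sigma H\,dv_\gamma}$, which is \eqref{eq-sup-l}.

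For the forward direction, assume \eqref{eq-sup-l} holds and abbreviate $L=\frac{\int_\Sigma H_0\,dv_\gamma}{\int_\Sigma H\,dv_\gamma}$, so $\lambda_0=L$. The cleanest situation is $L\in\Lambda$: then $(\Sigma,\gamma,LH)$ admits a fill-in $(\Omega,g)$, and Theorem \ref{thm-ShiTam} with Remark \ref{rmk-ShiTam} gives $L\int_\Sigma H\,dv_\gamma\le\int_\Sigma H_0\,dv_\gamma$. By the definition of $L$ this is an equality, so equality holds in \eqref{eq-BY}, and the rigidity clause forces $\Omega$ to be isometric to a domain $D\subset\R^3$. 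Because $\R^3$ admits no closed minimal surface, the component $\p\Omega\setminus\Sigma$ must be empty, so $\p D$ is isometric to $(\Sigma,\gamma)$; by the uniqueness in Weyl's theorem $\p D$ is congruent to the standard convex embedding, and its outward mean curvature is $H_0$. Since this mean curvature equals $LH$ by the fill-in property, we get $LH=H_0$, i.e.\ $H=\frac1L H_0$ is a constant multiple of $H_0$.

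The main obstacle is the borderline possibility $\Lambda=(0,\lambda_0)$, where $L=\lambda_0\notin\Lambda$ and no fill-in is available at the endpoint to which the rigidity of Theorem \ref{thm-ShiTam} could be applied; merely running the previous argument only shows $L\notin\Lambda$, not the strict gap $\lambda_0<L$. The device for circumventing this, carried out in Theorem \ref{thm-l-mu-0}, is to interpose a second threshold $\mu_0$ defined through the total mass $\mst$ of the Shi--Tam extension of $(\Sigma,\gamma,\mu H)$ and to prove $\lambda_0\le\mu_0\le L$, with $\mu_0=L$ if and only if $H$ is a constant multiple of $H_0$. The inequality $\lambda_0\le\mu_0$ follows by gluing: a fill-in of $(\Sigma,\gamma,\lambda H)$ glued along $\Sigma$ to the Shi--Tam extension of the same data yields an asymptotically flat manifold with distributionally nonnegative scalar curvature, since the mean curvatures match across $\Sigma$, so the positive mass theorem applies and precludes $\lambda$ from exceeding the threshold at which $\mst$ would become negative. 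The inequality $\mu_0\le L$ together with its rigidity comes from the positivity of $\mst$ and the fact that $\mst$ vanishes exactly when the extension is flat, i.e.\ when $\mu H\equiv H_0$, which is impossible for a constant $\mu$ unless $H\propto H_0$. Turning this strict positivity of $\mst$ into a uniform quantitative gap in $\lambda$, rather than a mere exclusion of the endpoint, is the delicate step and the heart of Theorem \ref{thm-l-mu-0}; granting it, \eqref{eq-sup-l} forces $\mu_0=L$ and hence $H\propto H_0$.
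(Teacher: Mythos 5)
Your ``if'' direction is correct and is exactly the paper's: the Euclidean fill-in bounded by the Weyl embedding shows $1/c\in\Lambda$, and \eqref{eq-upper-bd-l} gives the matching upper bound. Your treatment of the case $\lambda_0\in\Lambda$ is also a correct self-contained argument (the paper reaches the same conclusion in Theorem \ref{thm-l-mu-0}(iii)--(iv) via the rigidity of the corner positive mass theorem of Shi--Tam/Miao rather than via Cohn--Vossen uniqueness); note only that the rigidity clause of Theorem \ref{thm-ShiTam} is stated for $\p\Omega=\Sigma$, so when $\p\Omega\setminus\Sigma\neq\emptyset$ you should first rule out horizons (e.g.\ the localized Penrose inequality makes \eqref{eq-BY} strict in that case) before invoking it.

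The genuine gap is the case $\Lambda=(0,\lambda_0)$, which you correctly identify as carrying all the content and then do not prove: you cite Theorem \ref{thm-l-mu-0} and write ``granting it.'' Since in the paper Theorem \ref{thm-main} is precisely a corollary of Theorem \ref{thm-l-mu-0}, what must be supplied is exactly what you grant: (a) that $\mu_0$ is well defined, which requires the continuity and strict monotonicity of $u_0\mapsto\m(u_0)$ (the whole of the paper's Section \ref{sec-mass}: the comparison principle, the mass comparison theorem with its equality case, and $C^0$-continuity), and which you assume tacitly when you speak of ``the threshold at which $\mst$ would become negative''; and (b) the inequality $\mu_0\le L$ with its rigidity. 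Moreover, the justification you sketch for (b) is false: you claim ``$\mst$ vanishes exactly when the extension is flat, i.e.\ when $\mu H\equiv H_0$.'' In fact the ADM mass of the Shi--Tam extension vanishes at $\mu=\mu_0$ for \emph{every} Bartnik data --- that is the definition of $\mu_0$ --- while the extension is flat only in the proportional case; zero ADM mass of a manifold with boundary implies nothing by itself. The paper's actual mechanism is the monotonicity formula \eqref{eq-ShiTam-monotone}: $\m(u_0;r)$ is non-increasing in $r$ and tends to the ADM mass, so with $u_0=H_0/(\mu_0 H)$ one gets
\be
\int_\Sigma (H_0-\mu_0 H)\, dv_\gamma \;=\; \m(u_0;0)\;\ge\;\m(u_0)\;=\;0,
\ee
i.e.\ $\mu_0\le L$, and equality forces $\m(u_0;\cdot)$ to be constant, whence by \eqref{eq-ShiTam-monotone} the solution satisfies $u\equiv1$, i.e.\ $\mu_0 H=H_0$; this is \eqref{eq-t-0} and its rigidity in Theorem \ref{thm-monotone-t}. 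Once that is in hand there is no further ``delicate step of turning positivity into a quantitative gap'': the strict inequality $\lambda_0\le\mu_0<L$ for $H\not\propto H_0$ is immediate. So your proposal has the right architecture but omits, and in part misstates, the argument that constitutes the theorem's proof.
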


It follows immediately from Theorem \ref{thm-main} and (\ref{eq-upper-bd-l}) that
  condition  \eqref{eq-BY}  is {not}   sufficient for $ (\Sigma, \gamma, H)$ to possess a nonnegative scalar curvature fill-in:
given Bartnik data $ (\Sigma, \gamma, H)$ whose  $ H $ is not a constant multiple of $H_0$,
if we consider the related triple $(\Sigma, \gamma, \l H)$, where
  $  \l \in \lf( \lambda_0, \frac{ \int_\Sigma H_0} {\int_\Sigma H }  \ri] $,
then  $(\Sigma, \gamma, \l H)$ satisfies \eqref{eq-BY} but does not admit any nonnegative scalar curvature fill-in.

A  statement stronger than Theorem \ref{thm-main}, which also follows from  Theorem \ref{thm-l-mu-0}, is:

\begin{thm} \label{thm-no-fill-in}
Given  Bartnik data  $(\Sigma, \gamma, H)$ satisfying
\be \label{eq-no-fill-in}
 \int_\Sigma (H_0 - H) d v_\gamma = 0  \ \mathrm{but} \ H \neq H_0 ,
 \ee
 there exists a constant $ \epsilon >0$, depending on $ (\Sigma, \gamma, H) $,
  such that for any function $\tilde{H}$ on $ \Sigma $ satisfying
\be \label{eq-condition-H}
\tilde{H} > H - \epsilon ,
 \ee
$(\Sigma, \gamma, \tilde{H})$ admits no fill-ins of nonnegative scalar curvature.
\end{thm}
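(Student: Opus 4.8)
My plan is to track the fill-in obstruction through the ADM mass of the Shi--Tam extension, regarded as a functional $\mst(\cdot)$ of the boundary mean curvature with the metric $\gamma$ held fixed. I will use three structural properties of $\mst$, all of which come from the construction in \cite{ShiTam02}. First, if $(\Sigma,\gamma,\tilde H)$ admits a nonnegative scalar curvature fill-in, then $\mst(\tilde H)\ge 0$; this is precisely the gluing step in the proof of Theorem \ref{thm-ShiTam}, where the fill-in is attached to the Shi--Tam extension along $\Sigma$, the two mean curvatures there both equal $\tilde H$ so no negative distributional curvature appears at the corner, and the positive mass theorem (or the Riemannian Penrose inequality, in the presence of an inner minimal boundary, as in Remark \ref{rmk-ShiTam}) applies. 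Second, $\mst$ is order-reversing: if $\tilde H_1\ge \tilde H_2$ pointwise then $\mst(\tilde H_1)\le \mst(\tilde H_2)$, since the boundary value of the potential in the quasi-spherical equation is $H_0/\tilde H$, and the comparison principle for that parabolic equation propagates the ordering to the extension and hence to the mass. Third, $\mst$ depends continuously on $\tilde H$, by continuous dependence for the same construction.

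The first key step is to show that \eqref{eq-no-fill-in} forces the \emph{strict} inequality $\mst(H)<0$. By the Shi--Tam monotonicity formula, $8\pi\,\mst(H)$ is the limit at infinity of a quantity along the exterior foliation whose value on $\Sigma$ is $\int_\Sigma (H_0-H)\,dv_\gamma$, which vanishes by \eqref{eq-no-fill-in}; monotonicity then gives $\mst(H)\le 0$. If $\mst(H)=0$, then this monotone quantity is constant and equal to $0$ along the whole foliation, so the rigidity case applies: the potential must be constant on each leaf, and since the total mass is zero it must in fact be identically $1$, i.e.\ the extension is the Euclidean exterior of the convex body bounded by the isometric embedding of $(\Sigma,\gamma)$ in $\R^3$. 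In that case $H=H_0$, contradicting \eqref{eq-no-fill-in}. Hence $\mst(H)=-\delta$ for some $\delta>0$. Extracting this strict deficit from the rigidity statement is the step I expect to require the most care.

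Given the strict deficit, I would choose $\epsilon$ in two stages. I first require $\epsilon<\min_\Sigma H$, so that $\tilde H>H-\epsilon$ automatically forces $\tilde H>0$ and the extension of $(\Sigma,\gamma,\tilde H)$ is defined. By continuity, $\mst(H-\epsilon)\to \mst(H)=-\delta<0$ as $\epsilon\to 0^+$, so after shrinking $\epsilon$ I may also arrange $\mst(H-\epsilon)<0$. Now let $\tilde H$ be any function with $\tilde H>H-\epsilon$. Then $\tilde H\ge H-\epsilon$ pointwise, so the order-reversing property gives
\be
 \mst(\tilde H)\le \mst(H-\epsilon)<0 .
\ee
If $(\Sigma,\gamma,\tilde H)$ possessed a nonnegative scalar curvature fill-in, the first property would force $\mst(\tilde H)\ge 0$, a contradiction. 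Therefore no such fill-in exists, proving the theorem.

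The decisive difficulty is the strict sign $\mst(H)<0$: the plain integral inequality of Theorem \ref{thm-ShiTam} only yields $\mst(H)\le 0$ and is too weak here, so one must invoke the equality/rigidity case of the Shi--Tam monotonicity to rule out $\mst(H)=0$. The remaining technical points are the order-reversing and continuity properties of $\mst$, which rest on the comparison principle and stability estimates for the quasi-spherical equation; once these are in place the one-sided neighborhood of non-fillable data falls out of the displayed inequality. This is also the content that the refined estimate of $\lambda_0$ in Theorem \ref{thm-l-mu-0} packages: the sign of $\mst$ is exactly what distinguishes the fill-in threshold $\lambda_0$ from the naive bound \eqref{eq-upper-bd-l}.
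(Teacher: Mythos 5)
Your proposal is correct, and it is built from exactly the ingredients the paper develops in Section \ref{sec-mass} (the Shi--Tam extensions, the monotonicity formula \eqref{eq-ShiTam-monotone}, the comparison principle behind Theorem \ref{thm-mass-comparison}, the continuity Theorem \ref{thm-mass-continuous}, and the corner positive mass theorem of Miao/Shi--Tam together with Remark \ref{rmk-ShiTam}), but your endgame is genuinely organized differently from the paper's. The paper proves this theorem as Corollary \ref{thm-no-fill-ins-1}(ii) of Theorem \ref{thm-l-mu-0}: it introduces the threshold $\mu_0$ where the extension's mass vanishes, shows $\mu_0<1$ under \eqref{eq-no-fill-in} via the equality case of \eqref{eq-t-0} (this is your ``strict deficit'' step in disguise, since $\mst(H)<0$ is equivalent to $\mu_0<1$ by strict monotonicity), picks $\epsilon$ so that $\tilde H>H-\epsilon\ge\mu_0 H$, and then glues a putative fill-in of $\tilde H$ to the \emph{zero-mass} extension of $(\Sigma,\gamma,\mu_0 H)$; because the reference mass is $0$, the inequality part of the corner PMT gives no contradiction there, and the paper must invoke its \emph{rigidity} case (mass zero forces the glued manifold to be Euclidean, hence $\tilde H=\mu_0H=H_0$, contradicting $\tilde H>\mu_0 H$). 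You instead glue the fill-in to the extension of $(\Sigma,\gamma,\tilde H)$ itself, where the mean curvatures match at the corner, and derive the contradiction from the strict negativity $\mst(\tilde H)\le\mst(H-\epsilon)<0$ obtained by order-reversal plus continuity; this needs only the inequality part of the corner PMT. What each buys: your route avoids the deeper equality/rigidity statement of the positive mass theorem with corners, at the price of leaning on the continuity theorem along the non-radial path $H-\epsilon$; the paper's route handles the borderline case $\hat H\ge\mu_0H$ with equality allowed, and so yields the stronger Theorem \ref{thm-l-mu-0}(iii) and the quantitative threshold $\mu_0$ in one stroke. Both arguments extract strictness from the same ultimate source, the rigidity case of \eqref{eq-ShiTam-monotone} using $R_r>0$. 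One small imprecision in your write-up: if $\mst(H)=0$, the vanishing of the right-hand side of \eqref{eq-ShiTam-monotone} forces $u\equiv1$ directly (since $R_r>0$ and $u>0$), so $H=H_0$ immediately; there is no intermediate step where $u$ is merely ``constant on each leaf'' and no appeal to the value of the mass is needed to upgrade it.
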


Our proof of Theorem \ref{thm-l-mu-0} (stated in Section \ref{sec-apps})  follows the  idea and  arguments in   \cite{ShiTam02}.
Basically, we  use  extensions of $(\Sigma,  \gamma, H)$ to study  its fill-ins.

\begin{definition} A complete non-compact Riemannian manifold $(N, g)$
 with nonempty boundary is called  an {\em extension} of $(\Sigma, \gamma, H)$ if $ \p N = \Sigma$,
  the induced metric from $g $ on $ \Sigma $ is $\gamma$ and the mean curvature of $ \Sigma $ in $(N, g)$, in the direction pointing into $N$, is $H$.
  \end{definition}
Given  Bartnik data $(\Sigma, \gamma, H)$,
a natural choice of asymptotically flat  extension $(N, g)$, having zero scalar curvature, of $(\Sigma, \gamma, H)$
was constructed in \cite{ShiTam02}.  In section \ref{sec-mass},  we consider the map
$$\Phi(H) = \m ( g) $$
where $ \m (g)$ is the total (ADM) mass (see \cite{ADM61}) of such an $(N, g)$ (with $\Sigma$ and $\gamma$ fixed) and show that $ \Phi  (\cdot) $ is continuous in  the $C^0$-topology  and is strictly monotone decreasing along  each path $\{ \l H \}_{\l >0 }$.
In section \ref{sec-apps}, we apply results on $ \Phi(\cdot)$ to the fill-in problem and prove Theorem \ref{thm-l-mu-0}.

In the remaining  part of the paper, section \ref{sec-static}, we prove a positive mass theorem for asymptotically flat nonnegative scalar
curvature extensions of $(\Sigma, \gamma, \l_0 H)$, and show that the zero mass case occurs  only if the extension is static vacuum (see Definition \ref{def_static}). 
Such a result is in   relation to the conjecture (\cite[Problem 2]{Jauregui11}) that $ (\Sigma, \gamma, \l_0 H)$ admits a nonnegative
scalar curvature fill-in.

\begin{thm} \label{thm-static-extension-l-0-intro}
Given Bartnik data $(\Sigma, \gamma, H)$, let $(M, g)$ be an  asymptotically flat extension of  $(\Sigma, \gamma, \l_0 H)$
with  nonnegative scalar curvature.
Then either $ (M,g)$ has  positive mass or else $(M, g) $ is static vacuum with zero mass.
\end{thm}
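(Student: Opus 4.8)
The plan is to establish the two alternatives in turn: first the inequality $\m(g) \ge 0$, and then the rigidity statement that $\m(g)=0$ forces $(M,g)$ to be static vacuum (Definition \ref{def_static}). The tools I would draw on are the defining property of $\lambda_0$, namely that for every $\lambda \in (0,\lambda_0)$ the data $(\Sigma,\gamma,\lambda H)$ admits a nonnegative scalar curvature fill-in; the refined estimate of Theorem \ref{thm-l-mu-0}; the positive mass theorem together with the corner results of Miao and Shi--Tam; and the Riemannian Penrose inequality as invoked in Remark \ref{rmk-ShiTam}.

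For $\m(g)\ge 0$ I would argue by contradiction, assuming $\m(g)<0$. Fix a small $\e>0$ with $\lambda_0-\e>0$ and choose a nonnegative scalar curvature fill-in $(\Omega_\e,\bar g)$ of $(\Sigma,\gamma,(\lambda_0-\e)H)$, which exists since $\lambda_0-\e \in \Lambda$. Along $\Sigma$ the boundary mean curvature of $(M,g)$ is $\lambda_0 H$, exceeding that of $\Omega_\e$ by $\e H$. The crucial step is to remove this mismatch: I would modify $g$ in a collar of $\Sigma$ inside $M$, keeping the induced metric $\gamma$ fixed and the change compactly supported, so as to lower the boundary mean curvature from $\lambda_0 H$ to $(\lambda_0-\e)H$ while preserving $R\ge 0$. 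Since the modification is compactly supported the ADM mass is unchanged and stays negative. Gluing $(\Omega_\e,\bar g)$ to the modified $M$ along $\Sigma$ then yields a complete, asymptotically flat manifold whose two sides meet with equal mean curvature, so that by Miao and Shi--Tam the distributional scalar curvature is nonnegative across the corner, while any remaining boundary coming from $\Omega_\e$ is minimal. Applying the positive mass theorem, or the Riemannian Penrose inequality in the presence of a minimal boundary (Remark \ref{rmk-ShiTam}), forces nonnegative total mass, contradicting that this mass equals $\m(g)<0$. Hence $\m(g)\ge 0$.

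For the rigidity, suppose $\m(g)=0$. By the previous paragraph $(M,g)$ then minimizes the ADM mass among all asymptotically flat extensions of $(\Sigma,\gamma,\lambda_0 H)$ with $R\ge 0$, the minimum value being $0$. Here I would invoke the scalar curvature deformation theory of Corvino and Schoen: if $(M,g)$ were \emph{not} static vacuum, it would carry no nontrivial static potential, the linearized scalar curvature operator would be surjective on compactly supported interior variations, and one could construct a compactly supported deformation preserving the boundary data $(\Sigma,\gamma,\lambda_0 H)$ and the condition $R\ge 0$ while strictly decreasing the mass below $0$. This contradicts the minimality just established, so $(M,g)$ must be static vacuum.

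The two genuinely delicate points, which I expect to be the main obstacles, are precisely the collar modification and the deformation argument. For the first, the identity $R = R_\gamma - H^2 - |A|^2 - 2\partial_t H$ for a metric of the form $dt^2+\gamma_t$ shows that lowering $H$ at $\Sigma$ and returning to the original profile forces $\partial_t H>0$ somewhere, whose contribution $-2\partial_t H$ threatens the sign of $R$; the decrease must therefore be spread across the collar so that $\partial_t H$ stays small, and carrying this out for all small $\e$ while fixing $\gamma$ is the heart of the positivity half. For the second, producing a mass-decreasing, scalar-curvature-preserving deformation that respects the prescribed boundary metric and mean curvature requires the full Corvino--Schoen machinery together with the characterization of static metrics as the obstruction to surjectivity of the linearized operator; this is where the static vacuum condition enters and constitutes the core of the rigidity half.
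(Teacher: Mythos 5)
Your high-level outline (derive positivity by gluing fill-ins of subcritical data $(\Sigma,\gamma,\lambda H)$, $\lambda<\lambda_0$, to the extension and invoking the corner positive mass theorem of Miao and Shi--Tam; derive rigidity from Corvino-type scalar curvature deformation) is the same as the paper's, but both of the steps you yourself flag as delicate are genuine gaps, and neither can be repaired in the form you propose. The first gap is the collar modification: you want to lower the boundary mean curvature from $\lambda_0 H$ to $(\lambda_0-\epsilon)H$ by a deformation that is supported in a collar of $\Sigma$, fixes the induced metric, keeps $R\ge 0$, and (being compactly supported) leaves the ADM mass unchanged. No such lemma can hold. Consider $M=\mathbb{R}^3\setminus B_1$ with the Euclidean metric: if one could lower its boundary mean curvature from $2$ to $2-\epsilon$ in this way, then regluing the flat unit ball would produce an asymptotically flat manifold of zero mass with $R\ge0$ away from a corner along which $H_{\mathrm{in}}=2>2-\epsilon=H_{\mathrm{out}}$ (the favorable sign); the rigidity part of the corner positive mass theorem (\cite[Theorem 2]{Miao02}, \cite[Theorem 3.1]{ShiTam02}) then forces the glued manifold to be flat with no mean curvature jump, a contradiction. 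Since your modification is purely local near $\Sigma$, no argument for it can distinguish the negative-mass manifold you wish to apply it to from this example; the statement you need is essentially as strong as the theorem itself. The paper's fix is to give up compact support: it conformally deforms by the harmonic function $u_\epsilon$ with $u_\epsilon=1$ on $\Sigma$ and $u_\epsilon\to 1-\epsilon$ at infinity. This preserves $R\ge0$, strictly lowers the boundary mean curvature by the strong maximum principle, and does change the mass, but only by $2(1-\epsilon)A_\epsilon$ with $A_\epsilon$ small, so the mass stays negative; one then does not match mean curvatures at all, but takes $\lambda_k\nearrow\lambda_0$ and uses that the corner theorem only needs the one-sided inequality $\lambda_k H>\hat H$.

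The second gap is in the rigidity half: ``a compactly supported deformation preserving the boundary data and $R\ge0$ while strictly decreasing the mass below $0$'' is self-contradictory, since a compactly supported change of the metric does not alter the ADM mass at all. What non-staticity and Corvino's theorem \cite{Corvino} actually give is a compactly supported deformation $\tilde g$ that pushes the scalar curvature strictly up on a small interior domain, with $\mathfrak{m}(\tilde g)=\mathfrak{m}(g)=0$; the mass decrease must then come from a second, global step, namely solving $-8\Delta_{\tilde g}w+R(\tilde g)w=0$ with $w=1$ on $\Sigma$, $w\to1$ at infinity, and passing to $\hat g=w^4\tilde g$, which has zero scalar curvature. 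Since $R(\tilde g)\ge0$ is not identically zero, the expansion coefficient of $w$ at infinity is strictly negative, so $\mathfrak{m}(\hat g)<0$, and simultaneously the boundary mean curvature drops strictly below $\lambda_0 H$. At that point one is back in the gluing situation of the first half (a negative-mass extension with boundary mean curvature strictly below $\lambda_0 H$), not in an abstract mass-minimization framework; your appeal to ``minimality'' hides that the contradiction is again obtained by attaching fill-ins of $(\Sigma,\gamma,\lambda_k H)$ and citing the corner theorem. Two smaller omissions: you must first upgrade ``not static vacuum'' to ``not static'' (static metrics have constant scalar curvature, and asymptotic flatness forces that constant to vanish), and in the positivity half your fill-in may carry extra minimal boundary components, which you do correctly route through the Penrose-inequality version of the argument (Remark \ref{rmk-ShiTam}).
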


\section{Analysis of the mass functional} \label{sec-mass}

In this section,  $ \Sigma_0 $ is assumed to be  a compact, connected, strictly convex  hypersurface in $ \R^n $ with $ n \ge 3$. Let $\Omega_0$ be the domain bounded by $\Sigma_0$ and  let $\Sigma_r=\{x\in \R^n\setminus \Omega_0\ |\ d(x,\Sigma_0)=r\}$ where $ d(\cdot, \Sigma_0)$ is the Euclidean
distance to $ \Sigma_0$.
Then $\Sigma_r$, ${r\ge0}$,  are strictly convex, and $N = \R^n\setminus \Omega_0$ is foliated by $\{\Sigma_r\}$. We shall henceforth identify $N$
with $\Sigma_0 \times [0, \infty)$, with the Euclidean metric  $g_{_E}$ given by $dr^2+g_r$, where $g_r$ is the induced metric on $\Sigma_r$.

Consider the parabolic initial value problem:
 \be \label{eq-pde}
\left\{\begin{array}{rll}
\displaystyle H_0 \frac{\p u  }{\p r}   & =& u^2 \Delta_r u + \frac12 ( u - u^3) R_r,  \hbox{  on $ \Sigma_0 \times [0, \infty)$},
\\
u(x,0)& =&u_0(x),
\end{array}\right.
 \ee
 where $H_0$ is the mean curvature of $\Sigma_r$, $R_r$ is the scalar curvature of $\Sigma_r$, and $ u_0 $ is a positive function on $ \Sigma_0$.
The geometric meaning of \eqref{eq-pde} is that the metric $ g_u = u^2 dr^2 + g_r$ has zero scalar curvature and
$(N , g_u )$ is an extension of $ (\Sigma_0, g_0, H_0/u_0)$.

Given any  $ u_0 > 0 $ on $ \Sigma_0$, by \cite[Theorem 2.1]{ShiTam02} there exists  a unique positive solution $u$
 to \eqref{eq-pde}. Define
\be \label{eq-mur}
\m (u_0 ; r) =  \int_{\Sigma_r} H_0 ( 1 - u^{-1} ) d \sigma_r
\ee
and
\be \label{eq-mu}
\m (u_0) = \lim_{ r  \rightarrow \infty} \m ( u_0 ; r),
\ee
where $d\sigma_r$ is the Euclidean area form induced on $\Sigma_r$.
The following facts on $\m (u_0; r) $ and $ \m (u_0)$ were proved in
 Lemma 4.2 and Theorem 2.1 (c) in \cite{ShiTam02}:
\begin{enumerate}
\item[a)] $\m (u_0; r) $ is monotone decreasing in $r$ with
\be \label{eq-ShiTam-monotone}
\frac{d}{dr}  \m (u_0; r)  = - \frac12 \int_{\Sigma_r} R_r u^{-1} ( 1 - u)^2 d \sigma_r .
\ee
\item[b)]  The metric $ g_u = u^2 dr^2 + g_r $ is  asymptotically flat, with total mass equal to
 $\m (u_0) $, up to multiplication by a positive constant depending only on $n$.
\end{enumerate}

In what follows, we apply the monotonicity of $ \m(\cdot, r)$ to obtain  some additional properties on $\mathfrak{m}(\cdot)$.  We first
give a comparison principle for solutions to \eqref{eq-pde}:

\begin{lemma} \label{lem-comparison} $\;$
 \begin{enumerate}
   \item [(i)] Given two  positive functions  $u_0$ and $ v_0$ on $ \Sigma_0$,
let $ u $, $ v$ be  the solutions to \eqref{eq-pde} on $ \Sigma_0 \times [0, \infty)$ with initial conditions $u_0$, $ v_0$ respectively.
If $ u_0 \ge v_0 $, then $ u \ge v $  on $\Sigma_0 \times [0, \infty) . $
If in addition $ u_0 \neq v_0$, then $ u > v $ on $ \Sigma_0 \times (0, \infty)$.
   \item [(ii)] Let  $ \{ u_0^{(i) } \} $ and $ v_0$  be positive functions on $ \Sigma_0$,
and let $ \{ u^{(i)}  \} $, $ v$ be  the solutions to \eqref{eq-pde} on $ \Sigma_0 \times [0, \infty)$ with initial conditions $ \{ u_0^{(i)} \}$, $ v_0$ respectively. Suppose $\lim_{i\to\infty}\sup_{\Sigma_0}|u_0^{(i)}- v_0|=0$. Then for all $T>0$,
$$
\lim_{i\to\infty}\sup_{\Sigma_0\times[0,T]}|u^{(i)}- v|=0.
$$
 \end{enumerate}

\end{lemma}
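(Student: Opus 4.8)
The plan is to reduce both parts to maximum principles for a single \emph{linear} parabolic equation obtained by subtracting two copies of \eqref{eq-pde}. Set $w=u-v$. Using the identities $u^2\Delta_r u-v^2\Delta_r v=u^2\Delta_r w+(u+v)(\Delta_r v)\,w$ and $(u-u^3)-(v-v^3)=(1-u^2-uv-v^2)\,w$, subtraction of the equations for $u$ and $v$ yields
\be
H_0\,\partial_r w=u^2\,\Delta_r w+c\,w,\qquad c:=(u+v)\Delta_r v+\tfrac12 R_r\big(1-u^2-uv-v^2\big),
\ee
on $\Sigma_0\times[0,\infty)$, with $w(\cdot,0)=u_0-v_0$. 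Because each $\Sigma_r$ is strictly convex we have $H_0>0$, and since $u,v$ are positive and smooth, on every slab $\Sigma_0\times[0,T]$ the coefficient $u^2/H_0$ is bounded between positive constants and $c$ is bounded; hence the equation is uniformly parabolic there. As $\Sigma_0$ is a closed manifold, the parabolic boundary is just the initial slice $\Sigma_0\times\{0\}$.

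For part (i), I would first establish $w\ge0$ by the weak maximum principle. To handle the (a priori unsigned) zeroth-order term, substitute $\tilde w=e^{-\lambda r}w$, which solves $H_0\,\partial_r\tilde w=u^2\,\Delta_r\tilde w+(c-\lambda H_0)\tilde w$; choosing $\lambda$ large enough that $c-\lambda H_0<0$ on $\Sigma_0\times[0,T]$, a negative minimum of $\tilde w$ over $\Sigma_0\times[0,T]$ attained at some $(x_0,r_0)$ with $r_0>0$ is impossible. Indeed, there $\Delta_r\tilde w\ge0$ and $\partial_r\tilde w\le0$, forcing $(c-\lambda H_0)\tilde w=H_0\partial_r\tilde w-u^2\Delta_r\tilde w\le0$, i.e.\ $\tilde w\ge0$, a contradiction. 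Thus $\min\tilde w$ is attained at $r=0$, where $\tilde w=u_0-v_0\ge0$, giving $w\ge0$ on $[0,T]$, and then on $[0,\infty)$ since $T$ is arbitrary. If moreover $u_0\neq v_0$, then $w\ge0$ is not identically zero, and the strong (parabolic) maximum principle applied to the same linear equation gives $w>0$ on $\Sigma_0\times(0,\infty)$.

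For part (ii), the crucial preliminary is uniform control of the solutions $u^{(i)}$. Since $u_0^{(i)}\to v_0$ uniformly and $v_0$ is a positive function on the compact $\Sigma_0$, for all large $i$ one has $\tfrac12\min_{\Sigma_0}v_0\le u_0^{(i)}\le 2\max_{\Sigma_0}v_0$. Let $\underline u,\overline u$ denote the solutions of \eqref{eq-pde} with these constant initial data; by part (i), $\underline u\le u^{(i)}\le\overline u$, and since $\underline u,\overline u$ are fixed positive smooth functions they are bounded between positive constants on $\Sigma_0\times[0,T]$. This yields bounds $0<a_1\le u^{(i)}\le a_2$ on $\Sigma_0\times[0,T]$ that are \emph{independent of $i$}, whence the coefficients $(u^{(i)})^2/H_0$ and $c_i$ of the linear equation for $w_i:=u^{(i)}-v$ are uniformly parabolic and uniformly bounded in $i$. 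Applying the substitution $\tilde w_i=e^{-\lambda r}w_i$ with a single admissible $\lambda$ (available since $|c_i|$ is bounded uniformly in $i$) together with the maximum-principle estimate from part (i), I obtain $\sup_{\Sigma_0\times[0,T]}|w_i|\le e^{\lambda T}\sup_{\Sigma_0}|u_0^{(i)}-v_0|\to0$, which is the assertion.

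The routine steps are the algebraic linearization and the textbook maximum-principle arguments on the closed manifold $\Sigma_0$. The main obstacle is part (ii): because the coefficients of the equation governing $w_i$ depend on the unknown $u^{(i)}$ itself, one cannot estimate $w_i$ directly without first knowing that the $u^{(i)}$ enjoy bounds—both a positive lower bound (for uniform parabolicity of $u^2/H_0$) and an upper bound (for boundedness of $c_i$)—that are uniform in $i$. Part (i) is precisely what makes this possible, via the sandwiching of $u^{(i)}$ between $\underline u$ and $\overline u$.
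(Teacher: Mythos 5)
Your proof is correct and follows essentially the same route as the paper: subtract the two equations to get a linear parabolic equation for $u-v$ with zeroth-order coefficient $(u+v)\Delta_r v-\tfrac12\,[(u^2+uv+v^2)-1]R_r$, multiply by $e^{-\lambda r}$ with $\lambda$ large to make that coefficient negative, and apply the weak and strong maximum principles on compact slabs $\Sigma_0\times[0,T]$. The one place you diverge is in part (ii): the paper obtains the $i$-independent bound on $u^{(i)}$ by citing Lemma 2.2 of Shi--Tam (which gives $|u^{(i)}|\le C_1$ on all of $\Sigma_0\times[0,\infty)$ once the initial data are uniformly bounded), whereas you derive it self-containedly by sandwiching $u^{(i)}$ between the solutions with constant initial data $\tfrac12\min_{\Sigma_0}v_0$ and $2\max_{\Sigma_0}v_0$, using part (i) as a comparison principle. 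Both yield the uniform bound needed to choose a single $\lambda$, and your barrier argument has the modest advantage of not invoking the external lemma; the paper's citation has the advantage of giving a bound uniform on the whole half-cylinder rather than only on slabs, though only the slab bound is needed here.
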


\begin{proof} (i)
Let $ f = u - v $.  By \eqref{eq-pde} and subtracting, one has
\be \label{eq-pde-f}
\displaystyle H_0 \frac{\p f  }{\p r}   =  u^2 \Delta_r f  + w  f,
\ee
where
\bee
w =  [( u + v ) \Delta_r v ]  - \frac12 [  ( u^2 + uv + v^2 )  - 1 ] R^r .
\eee
For each fixed  $ T > 0 $, let  $C$ be a constant satisfying
\be \label{eq-w-H0-C}
C  \ge  \frac{|| w ||_{ L^\infty ( \Sigma_0 \times [ 0, T ] )}}{   \min_{ \Sigma_0 \times [0, T] } H_0} .
\ee
Clearly $C$ can be chosen to depend only on
$$\sup_{\Sigma_0 \times [ 0, T ]}\lf(|\Delta_rv|+|u|+|v|+R^r+H_0^{-1}\ri). $$
Let $ \beta  $ be a constant satisfying $ \beta > C $.
On $\Sigma_0 \times [0,T]$, \eqref{eq-pde-f}  implies
\be \label{eq-mpde}
H_0 \frac{\p}{\p r} (e^{- \beta r  }f)  = u^2 \Delta_r(e^{- \beta r }f)  - ( \beta H_0 - w)   e^{- \beta  r }f,
\ee
and  $ \beta  H_0  - w >  0 $ on $\Sigma_0 \times [0, T]$ by \eqref{eq-w-H0-C}.
It follows from \eqref{eq-mpde} and the maximum principle that
 $ e^{-\beta r}f \ge 0 $ on $ \Sigma_0 \times [0, T]$. As $ T $ is arbitrary, $ f \ge 0 $ on $ \Sigma_0 \times [0, \infty)$.
If  $ f \neq 0$ on $ \Sigma_0$, then  $ f > 0 $ on $ \Sigma_0 \times (0, \infty)$  by  the strong maximum principle.

(ii) The proof is similar.
Since $ \{ u_0^{(i)} \} $ converges to $ v_0$ on $ \Sigma_0$ in the $C^0$-norm,
by \cite[Lemma 2.2]{ShiTam02} there is a constant $C_1$ independent of $i$ such that $|u^{(i)}|\le C_1$ on $\Sigma_0\times[0,\infty)$ for all $i$.
 Let $f^{(i)}=u^{(i)}-v$. For each fixed $T>0$,
 as in (i) one can find a constant $\beta>0$ which is independent of $i$ such that
\be \label{eq-mpde-1}
H_0 \frac{\p}{\p r} (e^{- \beta r  }f^{(i)})  = { (u^{(i)} ) }^2 \Delta_r(e^{- \beta r }f^{(i)})  - ( \beta H_0 - w^{(i)})   e^{- \beta  r }f^{(i)},
\ee
and $\beta H_0-w^{(i)}>0$ on $\Sigma_0 \times [0, T]$, where
\bee
w^{(i)}=  [( u^{(i)} + v ) \Delta_r v ]  - \frac12 [  ( u^{(i)})^2 +   u^{(i)}v + v^2   - 1 ] R^r .
\eee
One then concludes by the maximum principle that
$$
\max_{\Sigma_0\times[0,T]}\lf|e^{- \beta r  }f^{(i)}\ri|=\max_{\Sigma_0}|f^{(i)}|.
$$
From this the result follows.
\end{proof}

\begin{thm}\label{thm-mass-comparison}
Suppose $u_0 \ge v_0 $ on $ \Sigma_0$. Then
\begin{enumerate}
\item[(i)] $\mathfrak{m}(u_0;r) \ge \mathfrak{m}(v_0;r)$  for all $r\geq 0$. If $ \m (u_0; r) = \m (v_0; r) $ for some $r$, then $ u_0 = v_0$.

\item[(ii)] $\mathfrak{m}(u_0) \ge \mathfrak{m}(v_0)$. If  $\mathfrak{m}(u_0) = \mathfrak{m}(v_0)$, then   $u_0 = v_0$.

\end{enumerate}

\end{thm}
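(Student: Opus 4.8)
My plan is to obtain both inequalities directly from Lemma~\ref{lem-comparison}(i) together with the monotonicity formula \eqref{eq-ShiTam-monotone}, and to treat the two rigidity statements separately, the one in (ii) being the genuinely delicate point.

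For the inequality in (i), I would first apply Lemma~\ref{lem-comparison}(i) to conclude $u\ge v>0$ on all of $\Sigma_0\times[0,\infty)$, so that $u^{-1}\le v^{-1}$ and hence $1-u^{-1}\ge 1-v^{-1}$ pointwise. Since each $\Sigma_r$ is strictly convex, its Euclidean mean curvature $H_0$ is positive, so $H_0(1-u^{-1})\ge H_0(1-v^{-1})$ pointwise on $\Sigma_r$; integrating against the common area form $d\sigma_r$ (which depends only on the foliation, not on $u$ or $v$) gives $\m(u_0;r)\ge \m(v_0;r)$ for every $r\ge 0$. For the rigidity in (i) I would argue by contrapositive: if $u_0\neq v_0$, then Lemma~\ref{lem-comparison}(i) upgrades the comparison to $u>v$ on $\Sigma_0\times(0,\infty)$, so the pointwise inequality between the two integrands is strict on $\Sigma_r$ for every $r>0$, while at $r=0$ the integral inequality is already strict because $u_0>v_0$ on a set of positive measure. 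Thus $\m(u_0;r)>\m(v_0;r)$ for all $r\ge 0$, and equality at a single $r$ forces $u_0=v_0$. The inequality in (ii) is then immediate: letting $r\to\infty$ in $\m(u_0;r)\ge\m(v_0;r)$ and using \eqref{eq-mu} gives $\m(u_0)\ge\m(v_0)$.

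The rigidity in (ii) is the heart of the matter, and the naive passage to the limit is not enough, since it only preserves the non-strict inequality. Writing $D(r)=\m(u_0;r)-\m(v_0;r)=\int_{\Sigma_r}H_0(v^{-1}-u^{-1})\,d\sigma_r$, the previous step shows $D(r)>0$ for every finite $r$ (when $u_0\neq v_0$), and I must show that the limit $D(\infty)=\m(u_0)-\m(v_0)$ remains positive. The obstruction to a soft argument is that $D$ need not be monotone: subtracting two copies of \eqref{eq-ShiTam-monotone} gives $D'(r)=-\tfrac12\int_{\Sigma_r}R_r\big(\phi(u)-\phi(v)\big)\,d\sigma_r$ with $\phi(t)=t+t^{-1}-2=\tfrac{(1-t)^2}{t}$, and since $\phi$ is convex with its minimum at $t=1$, the sign of $\phi(u)-\phi(v)$ depends on the position of $u,v$ relative to $1$; hence $D'$ has no fixed sign and $D(r)>0$ for all finite $r$ does not by itself rule out $D(\infty)=0$.

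To force strictness in the limit I would argue by contradiction and linearize. Assuming $u_0\neq v_0$ with $\m(u_0)=\m(v_0)$, set $v_0^t=(1-t)v_0+t\,u_0$ and $M(t)=\m(v_0^t)$; by the inequality in (i)--(ii) the function $M$ is non-decreasing on $[0,1]$, so $M(0)=M(1)$ would force $M\equiv\mathrm{const}$ and therefore $M'(t)=0$ for all $t$. Here $M'(t)$ is the first variation of the mass in the nonnegative, nontrivial direction $f_0=u_0-v_0$: if $w$ is the solution with data $v_0^t$ and $\psi=\dot w$ solves the associated linear parabolic equation with $\psi(\cdot,0)=f_0$, then $\psi>0$ for $r>0$ by the strong maximum principle (exactly as in Lemma~\ref{lem-comparison}) and $M'(t)=\lim_{r\to\infty}\int_{\Sigma_r}H_0\,w^{-2}\psi\,d\sigma_r$. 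Thus everything reduces to showing that this limit is strictly positive for a nonnegative, nonzero perturbation. I expect this to be the main obstacle: as in the analysis of $D$, the integrand of $M'$ is not monotone in $r$, so positivity of the limit has to be read off from the behaviour of $\psi$ near infinity, where $w\to1$, the equation converges to its flat model, and the weighted integral $\int_{\Sigma_r}H_0\,w^{-2}\psi\,d\sigma_r$ detects the monopole of $\psi$. That this monopole is indeed positive is confirmed in the rotationally symmetric model, where $\Sigma_0$ is a round sphere and constant data $u_0$ give the explicit, strictly increasing mass $\m(u_0)=4\pi(1-u_0^{-2})$. An alternative, more geometric route would be to regard $g_u\ge g_v$ as two asymptotically flat, scalar-flat metrics on $N$ inducing the same metric on every $\Sigma_r$ and having equal ADM mass, and to seek a rigidity statement forcing $g_u=g_v$; but I expect the analytic control of $\psi$ at infinity to be the crux either way.
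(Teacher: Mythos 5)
Your treatment of (i) and of the inequality in (ii) is correct and agrees with the paper, which deduces these directly from Lemma \ref{lem-comparison}. You have also correctly located the genuine difficulty: $\m(u_0;r) > \m(v_0;r)$ for all finite $r$ does not by itself exclude equality of the limits, because the difference $D(r)$ is not monotone. The problem is that your proposed resolution does not close this gap; it only relocates it. Your linearization argument ends with the assertion that everything reduces to showing $\lim_{r\to\infty}\int_{\Sigma_r}H_0\,w^{-2}\psi\,d\sigma_r > 0$ for a nonnegative, nonzero initial perturbation $\psi(\cdot,0)=u_0-v_0$, and you explicitly leave this open (``I expect this to be the main obstacle''). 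But this unresolved claim is the very same phenomenon you started from: positivity of $\psi$ at every finite $r$ (via the strong maximum principle) does not imply positivity of the limiting weighted integral, for exactly the reason that positivity of $u-v$ did not imply $D(\infty)>0$. In addition, your scheme presupposes that $t\mapsto \m(v_0^t)$ is differentiable and that its derivative may be computed by differentiating under the limit $r\to\infty$; neither is established (the paper proves only $C^0$-continuity of $\m(\cdot)$, and only in the subsequent theorem). So as written, the rigidity statement in (ii) is not proved.

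The paper closes this gap by a quantitative, self-improving estimate rather than a variational one, and it never needs to linearize. From the Shi--Tam expansions $u = 1 + c(n)\m(u_0)r^{2-n}+O(r^{1-n})$, $v = 1 + c(n)\m(v_0)r^{2-n}+O(r^{1-n})$ and $H_0 = (n-1)/r + O(r^{-2})$, one obtains the lower bound $\m(u_0;r)-\m(v_0;r) \ge \frac{C_1}{r}\int_{\Sigma_r}(u-v)\,d\sigma_r$ for large $r$. On the other hand, assuming $\m(u_0)=\m(v_0)$, integrating the monotonicity formula \eqref{eq-ShiTam-monotone} from $r$ to $\infty$ and using $R_s=O(s^{-2})$ together with $1-(uv)^{-1}=O(s^{2-n})$ yields the upper bound $\m(u_0;r)-\m(v_0;r)\le C_2\int_r^\infty s^{-n}\int_{\Sigma_s}(u-v)\,d\sigma_s\,ds$. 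Combining the two gives the closed integral inequality $D(r)\le C_3\int_r^\infty s^{1-n}D(s)\,ds$ for the mass difference $D$, and a Gronwall-type argument applied to $F(r)=\int_r^\infty s^{1-n}D(s)\,ds$ (which satisfies $r^{n-1}F'+C_3F\ge 0$, is nonnegative, and tends to $0$ at infinity) forces $F\equiv 0$ for large $r$, hence $D(r)=0$ for large $r$, hence $u_0=v_0$ by part (i). If you wished to rescue your variational approach, you would need an analogous asymptotic analysis at infinity for the linearized flow $\psi$, which is no easier than the nonlinear estimate; the paper's route obtains the needed strictness directly from the asymptotics of $u-v$ and avoids the linearization entirely.
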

\begin{proof} (i) follows directly from Lemma \ref{lem-comparison}.
The  claim $ \m(u_0 ) \ge \m(v_0)$ in (ii)  follows from (i). We now focus on the equality case in (ii).

Let $ u$, $v$  be the unique positive solutions to \eqref{eq-pde} with initial conditions $u_0$,  $v_0$ respectively.
By Theorem 2.1  in \cite{ShiTam02}, $u$ and $v$ have an  asymptotic expansion
\be \label{eq-expan}
u = 1 + c(n) { \m (u_0) }{ r^{2-n} } + O ( r^{1-n} )
\ee
and
\be
v = 1 + c(n) { \m (v_0) }{ r^{2-n} } + O ( r^{1-n} )
\ee
 as $r \rightarrow \infty$, where $ c(n) $ is some positive constant depending only on $n$.
By Lemma 2.1 in \cite{ShiTam02}, the mean curvature $H_0$ of $\Sigma_r $ in $ \R^n$ satisfies
\be \label{eq-H-est}
 H_0 = \frac{n-1}{r} + O(r^{-2} ) .
 \ee
Therefore, it follows from \eqref{eq-expan} -- \eqref{eq-H-est} that
there exist constants $ R_1 > 0 $ and $ C_1 > 0 $ such that
\be \label{eq-mr-1}
\begin{split}
 \m (u_0; r )  - \m (v_0; r)  = & \ \int_{\Sigma_r} H_0 \lf( \frac{u - v}{u  v } \ri)  d \sigma_r \\
\ge & \  \frac{C_1}{r} \int_{\Sigma_r} (u  - v ) d \sigma_r ,
\end{split}
\ee
 for all $r\ge  R_1$. (Here we also used the fact $ u \ge v $ by Lemma \ref{lem-comparison}.)

Now suppose $ \m (u_0) = \m(v_0) $.
By \cite[Lemma 4.2]{ShiTam02}, i.e. \eqref{eq-ShiTam-monotone},  we  have
\be
\m (u_0; r)- \m  (v_0; r )
=  \frac12 \int_r^\infty \lf[\int_{\Sigma_s} R_s   \lf( 1 - \frac{1}{u v} \ri) (u - v )   d\sigma_s \ri] ds
\ee
for any $ r $.
By Lemma 2.1 in \cite{ShiTam02}, the scalar  curvature $R_r$ of $\Sigma_r $ satisfies
\be
 R_r = \frac{(n-1)(n-2)}{r^2} + O(r^{-3} ) .
\ee
Therefore, there  exist constants $ R_2 > 0 $ and $ C_2 > 0 $  such that
\be \label{eq-mr-2}
\m (u_0; r)- \m  (v_0; r )
\le    C_2 \int_r^\infty \lf[ s^{-n} \int_{\Sigma_s} (u - v)   d\sigma_s \ri] ds
\ee
for all $r \ge R_2$.
It follows from  \eqref{eq-mr-1} and \eqref{eq-mr-2} that
\be
\m  (u_0; r) - \m (v_0;r ) \le C_3 \int_r^\infty s^{1-n}  [\m  (u_0; s) - \m (v_0;s ) ] d s
\ee
for  $r \ge R_3 = \max \{ R_1, R_2 \}$  with  a constant $C_3 > 0$ independent of $r$.
Let
$$ F(r) =  \int_r^\infty s^{1-n} [ \m  (u_0; s) - \m (v_0;s ) ] d s, $$
which is finite by (\ref{eq-mr-2}).
Then
\bee
 r^{n-1} \frac{d }{dr} F (r) + C_3 F(r)  \ge 0 ,
\eee
which shows  $ F( r )  \exp\left(    \frac{C_3}{(2-n)r^{n-2}}\right)    $ is monotone increasing on $(R_3, \infty)$.
On the other hand, we have
$ F(r) \ge 0 $ and  $
\lim_{r \rightarrow \infty} F (r) = 0  .
$
Therefore,
$$ F(r) = 0 , \text{ for all}  \ r\ge R_3  .$$
Hence, $ \m (u_0; r) = \m (v_0; r)  $ for large $r$. We conclude   $ u_0 = v_0 $ by (i).
\end{proof}

Next we show that $ \m (\cdot)$ is continuous on $C^{\infty}_+(\Sigma_0)$, the space of smooth, positive functions on $\Sigma_0$,
endowed with the $C^0$-topology.

\begin{thm}\label{thm-mass-continuous}
The functional
$$ \m (\cdot) : C^{\infty}_+(\Sigma_0)\to \R$$ is continuous with respect to the $C^0$-norm.
\end{thm}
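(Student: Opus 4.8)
The plan is to fix $v_0\in C^\infty_+(\Sigma_0)$, take an arbitrary sequence $u_0^{(i)}\to v_0$ in the $C^0$-norm, and show $\m(u_0^{(i)})\to\m(v_0)$. The starting point is the integral form of the monotonicity \eqref{eq-ShiTam-monotone}: integrating from $T$ to $\infty$ and using \eqref{eq-mu}, for each $T>0$ one has
\[
\m(u_0)=\m(u_0;T)-\frac12\int_T^\infty\int_{\Sigma_r}R_r\,u^{-1}(1-u)^2\,d\sigma_r\,dr ,
\]
which expresses the mass as the finite-radius quantity $\m(u_0;T)$ minus a nonnegative tail. Writing the analogous identity for $v_0$ and subtracting, I would split $\m(u_0^{(i)})-\m(v_0)$ into three pieces: the tail of $u^{(i)}$ over $[T,\infty)$, the finite-radius difference $\m(u_0^{(i)};T)-\m(v_0;T)$, and the tail of $v$ over $[T,\infty)$. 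The argument is then a standard three-part estimate, provided each tail can be made small uniformly in $i$.

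For the finite-radius term I would use Lemma \ref{lem-comparison}(ii): since $u_0^{(i)}\to v_0$ in $C^0$, the solutions converge, $\sup_{\Sigma_0\times[0,T]}|u^{(i)}-v|\to 0$. Because the definition \eqref{eq-mur} of $\m(\cdot;T)$ involves only the trace of the solution on $\Sigma_T$ together with the fixed data $H_0$ and $d\sigma_T$, and because the solutions are bounded below by a fixed positive constant (so that $u^{-1}$ is controlled), it follows that $\m(u_0^{(i)};T)\to\m(v_0;T)$ for each fixed $T$; here I would invoke the a priori upper and lower bounds on solutions from \cite[Lemma 2.2]{ShiTam02}.

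The main obstacle is the uniform-in-$i$ control of the tails, since Lemma \ref{lem-comparison}(ii) gives convergence only on bounded $r$-intervals. Here I would exploit monotonicity with respect to the initial data. Fixing $\eta>0$ with $v_0-\eta>0$ and setting $a_0=v_0-\eta$, $b_0=v_0+\eta$, for all large $i$ one has $a_0\le u_0^{(i)}\le b_0$, hence by the comparison principle Lemma \ref{lem-comparison}(i) the solutions satisfy $a\le u^{(i)}\le b$. Using the uniform positive lower and upper bounds on $a,b,u^{(i)}$, the tail integrand is dominated pointwise, $u^{-1}(1-u)^2\le C[(1-a)^2+(1-b)^2]$, so the tail of $u^{(i)}$ is bounded by a constant multiple of the tails of $a_0$ and $b_0$, namely $C'[(\m(a_0;T)-\m(a_0))+(\m(b_0;T)-\m(b_0))]$. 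Since these reference tails are convergent integrals independent of $i$, they tend to $0$ as $T\to\infty$, giving the required uniform smallness.

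Finally I would assemble the estimate: given $\epsilon>0$, first choose $T$ large so that the $u^{(i)}$-tail (uniformly in large $i$) and the $v$-tail are each below $\epsilon/3$, then choose $i$ large so that the finite-radius difference is below $\epsilon/3$. This yields $|\m(u_0^{(i)})-\m(v_0)|<\epsilon$ for all large $i$, proving continuity in the $C^0$-topology. The only delicate point beyond bookkeeping is verifying the uniform two-sided bounds on solutions needed both to control $u^{-1}$ at radius $T$ and to dominate the tail integrand, which rests on the estimates of \cite{ShiTam02}.
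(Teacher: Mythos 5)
Your proof is correct, and its skeleton --- cutting the mass at a finite radius, handling the finite-radius term with Lemma \ref{lem-comparison}(ii), and making the tails small uniformly --- is the same as the paper's; where you genuinely differ is in the mechanism for the uniform tail estimate. The paper bounds the two tails together: writing their difference via \eqref{eq-ShiTam-monotone} as $\frac12\int_{r_0}^\infty\int_{\Sigma_s}R_s\bigl(1-\frac{1}{uv}\bigr)(v-u)\,d\sigma_s\,ds$, it invokes the quantitative decay estimates of \cite{ShiTam02}: $|u-1|,|v-1|\le C_1 r^{2-n}$ with $C_1$ uniform over $C^0$-bounded initial data (Lemma 2.2), $0<R_r\le C_2 r^{-2}$ (Lemma 2.1), and the area growth of $\Sigma_r$ (Lemma 2.4), yielding the explicit rate $C_3 r_0^{2-n}$ with $C_3$ independent of $u_0$. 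You instead dominate each tail separately by sandwiching $a\le u^{(i)}\le b$ via the comparison principle Lemma \ref{lem-comparison}(i), using the pointwise bound $u^{-1}(1-u)^2\le C\lf[(1-a)^2+(1-b)^2\ri]$ (valid because $a\le u\le b$ forces $|1-u|\le\max\{|1-a|,|1-b|\}$ and $u\ge\inf a>0$), so that uniform smallness follows from the purely qualitative fact that the reference tails $\m(a_0;T)-\m(a_0)$ and $\m(b_0;T)-\m(b_0)$ are convergent, $i$-independent quantities tending to zero. Your route is somewhat more elementary and robust: it needs only boundedness of solutions and finiteness of the reference masses, not the $r^{2-n}$ decay rate nor the uniformity of its constant over a $C^0$-neighborhood of the initial data; the paper's route, in exchange, produces an explicit modulus for the tail ($C_3 r_0^{2-n}$, uniform over the neighborhood). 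Both arguments are complete, and both reduce the finite-radius step to the same continuity statement from Lemma \ref{lem-comparison}(ii) together with uniform positive lower bounds on the solutions at radius $T$.
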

\begin{proof} Let $v_0\in C^{\infty}_+(\Sigma_0)$. Let  $\delta_0>0$ and $a>0$ be two constants such that 
if   $u_0\in   C^{\infty}_+(\Sigma_0)$ with $ ||u_0-v_0||_{C^0( \Sigma_0) }<\delta_0$, then $a^{-1}\le u_0\le a$. 
Given any   $  \delta \in (0, \delta_0) $, 
let $u_0\in   C^{\infty}_+(\Sigma_0)$ with $ ||u_0-v_0||_{C^0( \Sigma_0) }<\delta$. 
Let $u$ and $v$ be the positive solutions to \eqref{eq-pde} with initial data $u_0$ and $v_0$ respectively. 
 By  \cite[Lemma 2.2]{ShiTam02}, there is a constant $C_1$, depending only on $\Sigma_0$ and $ a$
(hence  independent of $u_0$)
such that
\be \label{eq-uand1}
|u-1| \le {C_1} { r^{2-n}} \ \ \mathrm{and} \ \ |v-1|\le {C_1}{r^{2-n}}. 
\ee
 By  \cite[Lemma 2.1]{ShiTam02},  the scalar curvature $ R_r$ of $ g_r$ satisfies
\be \label{eq-R-est}
0<R_r\le  {C_2} {r^{-2}}
\ee
for some constant $ C_2$ depending only on $ \Sigma_0$.
By \cite[Lemma 2.4]{ShiTam02}, $(\Sigma_0, r^{-2} g_r)$ converges  to the standard unit sphere $ \mathbb{S}^{n-1}$. 
Therefore, by \cite[Lemma 4.2]{ShiTam02}, i.e. \eqref{eq-ShiTam-monotone},
 we have for any $r_0 \geq 0$
\be\label{eq-mass-continuous-2}
\begin{split}
& \ \lf|\lf[\mathfrak{m}(u_0)-\mathfrak{m}(v_0)\ri]-\lf[\mathfrak{m}(u_0;r_0)-
\mathfrak{m}(v_0 ;r_0)\ri]\ri|\\
= & \ \frac12 \lf|\int_{r_0}^\infty\lf[\int_{\Sigma_s}R_s\lf(1-\frac1{uv}\ri)(v-u)
  d\sigma_s \ri]ds \ri|\\
  \le & \ C_3 r_0^{2 - n} 
  \end{split}
\ee
for some constant $C_3$ independent of $u_0$ and $r_0$. 

Given any $ \e > 0 $,  choose $r_0$ large enough so that $C_3 r_0^{2 - n} <  \frac12\e$. 
For this  fixed $ r_0$,  Lemma \ref{lem-comparison}(ii) implies that 
 the functional  $u_0 \mapsto \m(u_0;r_0)$  is continuous with respect to the $C^0$-norm. Hence if $\delta>0$ is small enough, then 
$$\lf|\m(u_0;r_0)-\m(v_0;r_0)\ri| < \frac12\e .$$ 
Combining this with \eqref{eq-mass-continuous-2}, the theorem follows. 
\end{proof}

By Theorem \ref{thm-mass-comparison} and Theorem \ref{thm-mass-continuous}, we have the following:

\begin{thm} \label{thm-monotone-t}
Given any $  \ u_0 \in C^\infty_+ (\Sigma_0)$,  the function
$ t \mapsto \m (t u_0)  $
is a continuous, strictly increasing function on $(0, \infty)$,  satisfying
\be \label{eq-limit-tu0}
 \lim_{t \rightarrow 0+ }  \m (t u_0) < 0 \ \mathrm{and} \
\lim_{t \rightarrow \infty} \m (t u_0) > 0 .
\ee
Moreover, if $t_0$ is the unique number in $(0,\infty)$ such that $\m (t_0 u_0)=0$, then
\be \label{eq-t-0}
t_0\ge \frac{\int_{\Sigma_0}H_0u_0^{-1} \ d \sigma_0 }{\int_{\Sigma_0}H_0  \ d \sigma_0 },
\ee
and equality holds if and only if $u_0$ is a constant.
\end{thm}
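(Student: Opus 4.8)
The plan is to obtain every assertion from the two structural results just proved, Theorem \ref{thm-mass-comparison} (comparison and rigidity) and Theorem \ref{thm-mass-continuous} ($C^0$-continuity), together with one explicit computation: that the constant function $1$ has zero mass.

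Continuity of $t \mapsto \m(t u_0)$ is immediate, since $t \mapsto t u_0$ is continuous from $(0,\infty)$ into $C^\infty_+(\Sigma_0)$ with its $C^0$-topology and $\m(\cdot)$ is $C^0$-continuous by Theorem \ref{thm-mass-continuous}. For strict monotonicity I would take $0 < t_1 < t_2$; then $t_2 u_0 \ge t_1 u_0$ with $t_2 u_0 \not\equiv t_1 u_0$ (as $u_0 > 0$), so the rigidity in Theorem \ref{thm-mass-comparison}(ii) gives $\m(t_2 u_0) > \m(t_1 u_0)$.

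The step I expect to be the main obstacle is determining the signs of the two limits in \eqref{eq-limit-tu0}, and the clean route through it is to reduce to constant initial data. First I would observe that $u \equiv 1$ solves \eqref{eq-pde} with the constant initial datum $1$ (both sides of the equation vanish identically), whence $\m(1;r) \equiv 0$ by \eqref{eq-mur} and so $\m(1) = 0$; geometrically $g_u = g_{_E}$ is flat. Comparing an arbitrary constant against $1$ through Theorem \ref{thm-mass-comparison}(ii) then yields $\m(c) > 0$ for constants $c > 1$ and $\m(c) < 0$ for constants $0 < c < 1$. Setting $m = \min_{\Sigma_0} u_0$ and $M = \max_{\Sigma_0} u_0$ and comparing $t u_0$ against the constant functions $t m$ and $t M$, Theorem \ref{thm-mass-comparison}(ii) sandwiches $\m(tm) \le \m(t u_0) \le \m(tM)$; for $t$ small the upper bound is negative (since $tM < 1$) and for $t$ large the lower bound is positive (since $tm > 1$), giving \eqref{eq-limit-tu0}. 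With continuity, strict monotonicity, and limits of opposite sign, the intermediate value theorem produces a unique $t_0 \in (0,\infty)$ with $\m(t_0 u_0) = 0$.

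Finally, for the estimate \eqref{eq-t-0} I would use that $\m(u_0;r)$ is decreasing in $r$ by \eqref{eq-ShiTam-monotone}, so
\be
0 = \m(t_0 u_0) \le \m(t_0 u_0; 0) = \int_{\Sigma_0} H_0\, d\sigma_0 - t_0^{-1} \int_{\Sigma_0} H_0 u_0^{-1}\, d\sigma_0 ,
\ee
and rearranging gives \eqref{eq-t-0}. Equality in \eqref{eq-t-0} forces $\m(t_0 u_0) = \m(t_0 u_0;0)$; by the monotonicity formula \eqref{eq-ShiTam-monotone} and $R_r > 0$ this holds precisely when the solution with datum $t_0 u_0$ is $u \equiv 1$, i.e. when $t_0 u_0 \equiv 1$, i.e. when $u_0$ is constant. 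Conversely, if $u_0 \equiv c$ then $t_0 c = 1$ (as $\m(s) = 0$ among constants only at $s = 1$) and $\int_{\Sigma_0} H_0 u_0^{-1}\, d\sigma_0 / \int_{\Sigma_0} H_0\, d\sigma_0 = c^{-1} = t_0$, so equality holds. The only genuinely delicate point throughout is the sign analysis of the limits, which the reduction to constants and the identity $\m(1) = 0$ resolve cleanly.
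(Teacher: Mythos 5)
Your proposal is correct and follows essentially the same route as the paper: continuity from Theorem \ref{thm-mass-continuous}, strict monotonicity from the rigidity in Theorem \ref{thm-mass-comparison}, the sign of the limits via comparison with the constant function $1$ (whose associated solution is $u \equiv 1$, so $\m(1)=0$), and the bound \eqref{eq-t-0} with its equality case from the monotonicity formula \eqref{eq-ShiTam-monotone} together with $R_r>0$. The only cosmetic difference is that you sandwich $\m(tu_0)$ between the masses of the constants $t\min u_0$ and $t\max u_0$ before comparing with $1$, whereas the paper compares $tu_0$ with $1$ directly once $t$ is small or large enough.
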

\begin{proof}
The continuity and strict monotonicity of $ \m ( t u_0)$ follow from Theorem \ref{thm-mass-continuous} and
Theorem \ref{thm-mass-comparison},  respectively. Claims \eqref{eq-limit-tu0} follow from  Theorem \ref{thm-mass-comparison}
and  the fact  $ \m (v_0) = 0 $ if $v_0\equiv1$.
Let $t_0$ be such that $\m (t_0 u_0)=0$. By \eqref{eq-ShiTam-monotone}, we have $\m (t_0 u_0;0)\ge0$ and
$\m (t_0 u_0;0) = 0$  if and only if $t_0u_0=1$.
Since
$\m (t_0 u_0;0)=\int_{\Sigma_0}H_0\lf[1-(t_0u_0)^{-1}\ri]d\sigma_0,$
the second part of the theorem follows.
\end{proof}

\section{Applications to fill-ins} \label{sec-apps}
Given  Bartnik data $(\Sigma, \gamma, H)$, we isometrically embed $(\Sigma, \gamma)$ as a closed, strictly convex surface
$\Sigma_0$ in $ \R^3$ with mean curvature $H_0$, and we identify its exterior region  with
$N = \Sigma \times [0, \infty)$.
Let $ \m (\cdot; r) $ and $\m (\cdot) $  be the  functionals defined in \eqref{eq-mur} and \eqref{eq-mu}.
Apply  Theorem \ref{thm-monotone-t} to the function $u_0=H_0/H$, so that the corresponding metric $u^2 dr^2 + g_r$
is an extension of $(\Sigma, \gamma, H)$.  In particular, there is a unique $ \mu_0>0$  such that
\be\label{eq-mu-zero}
\left\{
  \begin{array}{lll}
    \m( H_0 / ( \mu_0 H) ) & =& 0, \ \hbox{ } \\
    \m(H_0 / ( \mu  H)  ) & > & 0, \ \hbox{if $\mu<\mu_0$,} \\
    \m(H_0 / ( \mu  H) ) & < & 0,  \ \hbox{if $\mu>\mu_0$.}
  \end{array}
\right.
\ee

\begin{thm} \label{thm-l-mu-0}
Given Bartnik data $(\Sigma, \gamma, H)$, the following are true:
 \begin{enumerate}
   \item [(i)]
$ \displaystyle 
\l_0  \le \mu_0  \le  \frac{ \int_\Sigma H_0 d v_\gamma } {\int_\Sigma H d v_\gamma }.$ 

   \item [(ii)] $\mu_0 =  \frac{ \int_\Sigma H_0 d v_\gamma } {\int_\Sigma H d v_\gamma }$ if and only if $ H=\mu_0^{-1}H_0 $.

 \item[(iii)] For any function  $ \hat{H} \ge \mu_0 H $,
$(\Sigma, \gamma, \hat{H})$ does not have a fill-in of nonnegative scalar curvature, unless
$ \hat{H} = \mu_0 H = H_0 $.

\item [(iv)] If  $\l_0 = \mu_0$, then  $(\Sigma, \gamma , \lambda_0  H)$ does not admit  a nonnegative scalar curvature fill-in,
unless  $ \mu_0 H=H_0$.
 \end{enumerate}
\end{thm}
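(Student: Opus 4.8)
The crux of the whole theorem is a single bridge between fill-ins of $(\Sigma,\gamma,\hat H)$ and the sign of the extension mass $\m(H_0/\hat H)$, so the plan is to isolate and prove that bridge first, and then read off (i)--(iv) from the monotonicity/rigidity results of Section \ref{sec-mass}. The key fact I would establish is: \emph{if $(\Sigma,\gamma,\hat H)$ admits a nonnegative scalar curvature fill-in $(\Omega,g)$, then $\m(H_0/\hat H)\ge 0$, with equality only if $\hat H=H_0$ and $\Omega$ is a flat domain.} To prove it, take $u_0=H_0/\hat H$ so that the Shi--Tam extension $(N,g_u)$ has boundary mean curvature $H_0/u_0=\hat H$, which agrees with the mean curvature of $\Sigma=\partial\Omega$ computed from the fill-in side. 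Gluing $\Omega$ to $(N,g_u)$ along $\Sigma$ yields a complete, asymptotically flat manifold whose distributional scalar curvature is nonnegative (the corner term vanishes since the mean curvatures match) and whose ADM mass equals $\m(H_0/\hat H)$ up to a fixed positive constant. The positive mass theorem then forces $\m(H_0/\hat H)\ge 0$; when $\partial\Omega\setminus\Sigma$ carries minimal surfaces one invokes the Riemannian Penrose inequality instead, exactly as in Remark \ref{rmk-ShiTam}, and zero mass then rules out any nonempty horizon. The rigidity case of the positive mass theorem forces the glued manifold to be Euclidean, hence $u\equiv 1$, i.e.\ $u_0=H_0/\hat H\equiv1$ and $\hat H=H_0$.

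For part (i) I would treat the two inequalities separately. The upper bound comes from applying Theorem \ref{thm-monotone-t} to $u_0=H_0/H$: since $\m\big(H_0/(\mu_0 H)\big)=\m(\mu_0^{-1}u_0)=0$, the unique zero of $t\mapsto\m(tu_0)$ is $t_0=\mu_0^{-1}$, and \eqref{eq-t-0} reads $\mu_0^{-1}\ge \big(\int_\Sigma H_0 u_0^{-1}\,dv_\gamma\big)\big/\big(\int_\Sigma H_0\,dv_\gamma\big)=\big(\int_\Sigma H\,dv_\gamma\big)\big/\big(\int_\Sigma H_0\,dv_\gamma\big)$, which is the asserted bound on $\mu_0$. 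The lower bound $\l_0\le\mu_0$ uses the key fact in contrapositive form: if $\l>\mu_0$ then \eqref{eq-mu-zero} gives $\m\big(H_0/(\l H)\big)<0$, so $(\Sigma,\gamma,\l H)$ has no nonnegative scalar curvature fill-in; thus no $\l>\mu_0$ lies in $\Lambda$, and $\l_0=\sup\Lambda\le\mu_0$.

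Part (ii) is the equality analysis of \eqref{eq-t-0}: $\mu_0=\big(\int_\Sigma H_0\,dv_\gamma\big)\big/\big(\int_\Sigma H\,dv_\gamma\big)$ holds precisely when $u_0=H_0/H$ is constant, and then the equality condition $t_0 u_0=1$ with $t_0=\mu_0^{-1}$ gives $H=\mu_0^{-1}H_0$; the converse is immediate since $H=\mu_0^{-1}H_0$ yields $\int_\Sigma H=\mu_0^{-1}\int_\Sigma H_0$. For part (iii), suppose $(\Sigma,\gamma,\hat H)$ with $\hat H\ge\mu_0 H$ admits a fill-in. The key fact gives $\m(H_0/\hat H)\ge0$, while $\hat H\ge\mu_0 H$ gives $H_0/\hat H\le H_0/(\mu_0 H)$, so monotonicity (Theorem \ref{thm-mass-comparison}) forces $\m(H_0/\hat H)\le\m\big(H_0/(\mu_0 H)\big)=0$. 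Hence $\m(H_0/\hat H)=0$; the rigidity of Theorem \ref{thm-mass-comparison}(ii) forces $H_0/\hat H=H_0/(\mu_0 H)$, i.e.\ $\hat H=\mu_0 H$, and the equality clause of the key fact forces $\hat H=H_0$. Finally, part (iv) is simply part (iii) applied with $\hat H=\l_0 H=\mu_0 H$.

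The main obstacle is entirely contained in the key fact: one must verify carefully that matching the boundary mean curvatures produces an admissible asymptotically flat manifold to which the positive mass theorem or Riemannian Penrose inequality genuinely applies, and then extract the sharp rigidity (flatness, hence $u\equiv1$ and $\hat H=H_0$) from the zero-mass case. Everything else is a bookkeeping exercise combining this with the monotonicity and rigidity already proved in Theorems \ref{thm-mass-comparison} and \ref{thm-monotone-t}.
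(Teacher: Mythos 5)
Your proposal is correct and follows the paper's strategy almost verbatim: the ``key fact'' you isolate is precisely the cited corner positive mass theorem (\cite[Theorem 3.1]{ShiTam02}, \cite{Miao02}, together with Remark \ref{rmk-ShiTam}, which the paper invokes rather than re-proves), and your arguments for (i), (ii) and (iv) --- the contrapositive giving $\lambda_0 \le \mu_0$, the inequality \eqref{eq-t-0} applied to $u_0 = H_0/H$ with $t_0 = \mu_0^{-1}$ for the upper bound, and its rigidity clause for (ii) --- are exactly the paper's. The one genuine divergence is in (iii): the paper glues the fill-in of $(\Sigma,\gamma,\hat{H})$ to the extension of $(\Sigma,\gamma,\mu_0 H)$, so the mean curvature jumps favorably across the corner ($\hat{H} \ge \mu_0 H$), and invokes the rigidity of the corner positive mass theorem in that non-matching form (\cite[Theorem 2]{Miao02}) to get $\hat{H} = \mu_0 H = H_0$ in one stroke; you instead glue with matched mean curvature $\hat{H}$, deduce $\m(H_0/\hat{H}) = 0$ by sandwiching ($\m(H_0/\hat{H}) \ge 0$ from the key fact, $\m(H_0/\hat{H}) \le \m(H_0/(\mu_0 H)) = 0$ from monotonicity), use the rigidity of Theorem \ref{thm-mass-comparison}(ii) to pin down $\hat{H} = \mu_0 H$, and only then apply the matched-corner rigidity to conclude $\hat{H} = H_0$. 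Your route needs only the matching-mean-curvature case of the Shi--Tam/Miao theorem --- a weaker external input --- at the cost of leaning harder on the Section \ref{sec-mass} machinery; the paper's route is shorter given that the jump-case rigidity is available off the shelf. Both are valid, and the remaining caveat you flag (justifying the corner gluing and extracting flatness, with the Riemannian Penrose inequality ruling out horizons in the zero-mass case) is exactly the content of the results the paper cites.
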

\begin{proof}  (i) For $\lambda>0$, if $(\Sigma, \gamma, \lambda H)$   has a nonnegative scalar curvature fill-in, then
$\m( H_0/ (\l H) ) \ge0$ by the version of the positive mass theorem in  \cite[Theorem 3.1]{ShiTam02} (also see \cite[Theorem 1]{Miao02} and Remark \ref{rmk-ShiTam}).
Therefore, $ \l \le \mu_0 $. This  implies the first inequality, by the definition of $\lambda_0$. The second inequality
follows from \eqref{eq-t-0}.

 (ii) is  just a restatement of the rigidity case in \eqref{eq-t-0}.

(iii) Given $ \hat{H} \ge \mu_0 H $, suppose $ (\Sigma, \gamma, \hat{H})$ has a nonnegative scalar curvature fill-in $(\Omega, g)$.
Using the fact $ \m (H_0 / (\mu_0 H) ) = 0 $ and applying the rigidity part of
\cite[Theorem 3.1]{ShiTam02} (also see \cite[Theorem 2]{Miao02} and Remark \ref{rmk-ShiTam}),
we conclude $ \hat{H} = \mu_0 H$ and $ (\Omega, g)$ is isometric to a domain in $ \R^3$ with boundary mean curvature $H_0$.
Therefore, $ \hat{H} = \mu_0 H = H_0 $.

(iv) Suppose  $\lambda_0 =\mu_0 $ and $(\Sigma, \gamma , \lambda_0 H)$ has a nonnegative scalar curvature fill-in,
then  the proof of (iii) shows $\mu_0 H = H_0 $.
 \end{proof}

\begin{remark}
In \cite{Jauregui11} and \cite{Miao08} the following quasi-local mass functionals were proposed:
\begin{align*}
m_0(\Sigma, \gamma, H) &= \sqrt{\frac{|\Sigma|_\gamma}{16\pi}}\left( 1- \frac{1}{\lambda_0^2}\right),\\
m_1(\Sigma, \gamma, H) &= \sqrt{\frac{|\Sigma|_\gamma}{16\pi}}\left[ 1-    \left( \frac{ \int_\Sigma H dv_\gamma }{ \int_\Sigma H_0 dv_\gamma } \right)^2 \right],
\end{align*}
where $|\Sigma|_\gamma$ is the area of $(\Sigma, \gamma)$.  In terms of $\mu_0$, a related functional  is:
\begin{equation}
m_2(\Sigma, \gamma, H) = \sqrt{\frac{|\Sigma|_\gamma}{16\pi}}\left( 1- \frac{1}{\mu_0^2}\right),
\end{equation}
and Theorem \ref{thm-l-mu-0} shows $m_0 \leq m_2 \leq m_1$.
\end{remark}

If $  k H=   H_0$ for some positive constant $k$, then  $(\Sigma, \gamma, k H)$ has a Euclidean fill-in $(\Omega_0, g_{_E})$, hence
$ \l_0 = \mu_0 = k$.
When $ H$ is not a constant multiple of $H_0$, Theorem \ref{thm-l-mu-0} directly implies
the following corollary, whose (i) and (ii) are Theorem \ref{thm-main} and \ref{thm-no-fill-in} in the introduction.

\begin{cor}\label{thm-no-fill-ins-1}
Given  Bartnik data  $(\Sigma, \gamma, {H})$ satisfying
\be \label{eq-no-fill-in-1}
 \int_\Sigma (H_0 - {H}) d v_\gamma = 0  \ \mathrm{but} \ {H} \neq H_0 ,
 \ee
then
\begin{enumerate}

  \item [(ii)] $\lambda_0 \le \mu_0 <1$.

  \item [(ii)] There exists  $ \epsilon >0$, depending on $ (\Sigma, \gamma, {H}) $,
  such that if
 $ \hat{H} > {H} - \epsilon $,
then $(\Sigma, \gamma, \hat{H})$ admits no fill-ins of nonnegative scalar curvature.

\item[(iii)] If $\lambda_0=\mu_0$, then $(\Sigma, \gamma, \l_0 H) $  does not admit any nonnegative scalar curvature fill-in.

\end{enumerate}

\end{cor}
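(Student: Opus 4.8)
The plan is to derive all three parts directly from Theorem \ref{thm-l-mu-0}, exploiting the single observation that the hypothesis $\int_\Sigma (H_0 - H)\, dv_\gamma = 0$ is precisely the statement $\int_\Sigma H_0\, dv_\gamma = \int_\Sigma H\, dv_\gamma$, so that the upper bound appearing in Theorem \ref{thm-l-mu-0}(i) equals exactly $1$. This collapses the estimate there to $\lambda_0 \le \mu_0 \le 1$, and the rigidity statements then let me upgrade the inequalities.

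For part (i), I would first quote Theorem \ref{thm-l-mu-0}(i) to get $\lambda_0 \le \mu_0 \le 1$. To improve $\mu_0 \le 1$ to the strict $\mu_0 < 1$, I would argue by contradiction: if $\mu_0 = 1$, then $\mu_0 = \int_\Sigma H_0\, dv_\gamma / \int_\Sigma H\, dv_\gamma$, so the equality case of Theorem \ref{thm-l-mu-0}(ii) forces $H = \mu_0^{-1} H_0 = H_0$, contradicting the standing assumption $H \neq H_0$. Hence $\mu_0 < 1$, giving $\lambda_0 \le \mu_0 < 1$.

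For part (ii), the key is that $\mu_0 < 1$, together with the positivity and continuity of $H$ on the compact surface $\Sigma$, yields a uniform pointwise gap between $H$ and $\mu_0 H$. Concretely I would set $\epsilon = (1 - \mu_0)\min_\Sigma H$, which is strictly positive since $\mu_0 < 1$ and $\min_\Sigma H > 0$. Then $H - \epsilon \ge H - (1 - \mu_0)H = \mu_0 H$ pointwise, so any $\hat{H}$ with $\hat{H} > H - \epsilon$ satisfies $\hat{H} > \mu_0 H$ everywhere; in particular $\hat{H} \ge \mu_0 H$ with $\hat{H} \not\equiv \mu_0 H$. Theorem \ref{thm-l-mu-0}(iii) then forbids a nonnegative scalar curvature fill-in, because the only exceptional configuration admitting one there is $\hat{H} = \mu_0 H = H_0$, which cannot occur when $\hat{H} > \mu_0 H$ pointwise.

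For part (iii), I would apply Theorem \ref{thm-l-mu-0}(iv): under $\lambda_0 = \mu_0$, it gives no fill-in for $(\Sigma, \gamma, \lambda_0 H)$ unless $\mu_0 H = H_0$. To rule out that exceptional case I would note $\mu_0 H = H_0$ is the same as $H = \mu_0^{-1} H_0$, which by Theorem \ref{thm-l-mu-0}(ii) would force $\mu_0 = 1$, again contradicting $\mu_0 < 1$ from part (i); hence the exception is excluded and no fill-in exists. The only genuinely nontrivial step is the bookkeeping in part (ii): choosing $\epsilon$ so that the strict lower bound $\hat{H} > H - \epsilon$ is converted into the honest pointwise inequality $\hat{H} > \mu_0 H$. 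Compactness of $\Sigma$ and positivity of $H$ are exactly what supply the uniform gap that makes this work; every other assertion is an immediate specialization of Theorem \ref{thm-l-mu-0} to the hypothesis \eqref{eq-no-fill-in-1}.
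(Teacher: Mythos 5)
Your proof is correct and follows essentially the same route as the paper, which simply notes that under hypothesis \eqref{eq-no-fill-in-1} the ratio $\int_\Sigma H_0\, dv_\gamma / \int_\Sigma H\, dv_\gamma$ equals $1$ (and $H$ cannot be a constant multiple of $H_0$), so all three parts follow directly from Theorem \ref{thm-l-mu-0}. Your explicit choice $\epsilon = (1-\mu_0)\min_\Sigma H$ just fills in the routine detail the paper leaves implicit.
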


We record  a  lower bound of  $\mu_0$.
 \begin{prop}\label{prop-mu-lower-bound}
 Given Bartnik data $(\Sigma, \gamma, H)$,
 \be  \label{eq-mu-bound}
 {\mu}_0^2 \ge  \lf( \int_\Sigma H_0 dv_\gamma \ri) \lf( \int_\Sigma \frac {H^2}{H_0} dv_\gamma\ri)^{-1}. \ee
\end{prop}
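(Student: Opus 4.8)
The plan is to recast the claimed bound \eqref{eq-mu-bound} as a single integral inequality for the zero--mass extension and then to produce a new monotone quantity along the foliation $\{\Sigma_r\}$, in the spirit of \cite{ShiTam02}. Write $v_0 = H_0/(\mu_0 H)$ for the initial datum whose Shi--Tam extension has vanishing mass, so that $\m(v_0)=0$ by the definition of $\mu_0$ in \eqref{eq-mu-zero}, and let $v$ be the corresponding solution of \eqref{eq-pde}. Since $v_0^{-2} = \mu_0^2 H^2/H_0^2$, one has $\int_\Sigma (H^2/H_0)\,dv_\gamma = \mu_0^{-2}\int_{\Sigma_0} H_0 v_0^{-2}\,d\sigma_0$ (recall $dv_\gamma = d\sigma_0$ under the embedding), so \eqref{eq-mu-bound} is equivalent to
\be
\int_{\Sigma_0} H_0 v_0^{-2}\, d\sigma_0 \;\ge\; \int_{\Sigma_0} H_0\, d\sigma_0 .
\ee
Thus it suffices to establish this one inequality, using only that $\m(v_0)=0$.

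The key step is to analyze $Q(r) = \int_{\Sigma_r} H_0 v^{-2}\, d\sigma_r$. Using $\frac{d}{dr}d\sigma_r = H_0\,d\sigma_r$, the Riccati identity $\p_r H_0 = -|A|^2$, the Gauss equation $H_0^2 - |A|^2 = R_r$ (valid since each $\Sigma_r$ is a surface in $\R^3$), and the equation \eqref{eq-pde}, a direct computation gives the pointwise identity $\p_r(H_0 v^{-2}) + H_0^2 v^{-2} = R_r - 2 v^{-1}\Delta_r v$. Integrating over $\Sigma_r$, invoking Gauss--Bonnet in the form $\int_{\Sigma_r} R_r\, d\sigma_r = 8\pi$, and integrating the Laplacian term by parts, I obtain
\be
Q'(r) = 8\pi - 2\int_{\Sigma_r} v^{-2}|\nabla_r v|^2\, d\sigma_r \;\le\; 8\pi .
\ee
The same computation with $v\equiv 1$ shows $\int_{\Sigma_r} H_0\, d\sigma_r = \int_{\Sigma_0} H_0\, d\sigma_0 + 8\pi r$. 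Integrating $Q'$ then gives
\be
Q(0) = \lf(Q(r) - 8\pi r\ri) + 2\int_0^r\!\int_{\Sigma_s} v^{-2}|\nabla v|^2\, d\sigma_s\, ds ,
\ee
and since $Q(r) - 8\pi r = \int_{\Sigma_r} H_0(v^{-2}-1)\, d\sigma_r + \int_{\Sigma_0} H_0\, d\sigma_0$, letting $r\to\infty$ will yield $Q(0) \ge \int_{\Sigma_0} H_0\, d\sigma_0$ as desired, provided the boundary term $\int_{\Sigma_r} H_0(v^{-2}-1)\, d\sigma_r$ tends to $0$.

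I expect this asymptotic step to be the only delicate point. I would handle it by splitting $H_0(v^{-2}-1) = 2H_0(v^{-1}-1) + H_0(v^{-1}-1)^2$: the first term integrates to $-2\m(v_0;r)$, which tends to $-2\m(v_0)=0$ by the definition of the mass and the choice of $v_0$, while the second is controlled by the Shi--Tam decay estimates $|v-1|\le C r^{-1}$ together with $H_0 = O(r^{-1})$ and $|\Sigma_r| = O(r^2)$, giving $\int_{\Sigma_r} H_0(v^{-1}-1)^2\, d\sigma_r = O(r^{-1}) \to 0$. This completes the proof, and the error term $2\int_0^\infty\!\int_{\Sigma_s} v^{-2}|\nabla v|^2$ makes transparent that equality forces $\nabla v\equiv 0$, i.e.\ $v$ radial, consistent with the rigidity case $H\propto H_0$ already isolated in Theorem \ref{thm-l-mu-0}.
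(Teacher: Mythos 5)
Your proof is correct, and at bottom it runs on the same engine as the paper's: the monotone quantity $Q$ you construct is (up to sign and an additive term) exactly the quantity $\int_{\Sigma_r} H_0 (1-u^{-2})\, d\sigma_r$ of \cite[Lemma 2.11]{ShiTam02}, which is the one lemma the paper's proof consists of citing. Where the paper simply takes from that lemma both the monotonicity and the fact that the limit is a positive multiple of $\m(u_0)$, and then evaluates at $r=0$ with $u_0 = H_0/(\mu_0 H)$, you re-derive both ingredients from scratch: the monotonicity via the Riccati and Gauss equations plus Gauss--Bonnet (your identity $Q'(r) = 8\pi - 2\int_{\Sigma_r} v^{-2}|\nabla_r v|^2\, d\sigma_r$ checks out), and the limit identification via the splitting $v^{-2}-1 = 2(v^{-1}-1) + (v^{-1}-1)^2$, whose first term is $-2\m(v_0;r) \to 0$ and whose second term decays like $r^{-1}$ by the Shi--Tam estimates; this splitting is precisely why the limit in the cited lemma is \emph{twice} the mass. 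What your version buys is a self-contained argument that makes the rigidity mechanism visible (equality forces $\nabla_r v \equiv 0$); what it costs is generality, since your use of Gauss--Bonnet ties the computation to $n=3$, while the cited lemma holds for the quasi-spherical construction in all dimensions. Had you recognized $Q$ as the Shi--Tam/Bartnik quantity, your proof would collapse to the paper's two lines.
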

 \begin{proof}  By \cite[Lemma 2.11]{ShiTam02} (also see \cite{Bartnik93}),
if $u$ is the solution of \eqref{eq-pde}  with initial condition $ u_0$,
then $$   \int_{\Sigma_r} H_0 ( 1 - u^{-2} ) d \sigma_r $$ is monotone nondecreasing in $r$
 and limiting to a positive constant multiple of  $\m ( u_0)$.
 Take $ u_0 = H_0 / ( \mu_0 H) $, we have
 $$
 \int_{\Sigma} H_0 ( 1 - (\mu_0H)^2H_0^{-2} ) d \sigma \le 0,
 $$
by the definition of $\mu_0$.  This implies \eqref{eq-mu-bound}.
 \end{proof}

\section{$\l_0$ and static vacuum metrics} \label{sec-static}

Given  Bartnik data $(\Sigma, \gamma, H)$,
 it was conjectured  that $(\Sigma, \gamma, \l_0 H)$ admits a nonnegative scalar curvature fill-in (cf. \cite[Problem 2]{Jauregui11}).
In \cite[Proposition 5]{Jauregui11}, it was proved that such a fill-in must be static vacuum.
 By results in \cite{Miao02, ShiTam02} and Remark 1, a necessary condition for this conjecture to hold is that
  any asymptotically flat nonnegative scalar curvature extension of $(\Sigma, \gamma, \l_0 H)$ must have nonnegative mass.
In the rest of this paper we prove that this necessary condition holds.

\begin{definition}[\cite{Corvino}]
\label{def_static}
A Riemannian metric $g$ is called \emph{static} on an open set $\Omega $ if the formal $L^2$-adjoint of the
linearization of the scalar curvature map at $g$ has non-trivial kernel, i.e. there exists a function $f$,
not identically zero, satisfying
\be
 - ( \Delta_g f) g + \nabla^2_g f - f \Ric (g) = 0
\ee
on $ \Omega$. Here, $ \Delta_g  f $ and $ \nabla^2_g  f $ are the Laplacian and Hessian of $f$, and $\Ric(g)$ is the Ricci curvature of $g$.
$g$ is called \emph{static vacuum} if, in addition, $g$ has zero scalar curvature.
\end{definition}

\begin{thm} \label{thm-static-extension-l-0}
Given Bartnik data $(\Sigma, \gamma, H)$, let $(M, g)$ be an  asymptotically flat extension, having nonnegative scalar curvature,
 of  $(\Sigma, \gamma, \l_0 H)$.  Then the total mass of $(M,g)$ is nonnegative, and is zero only if $g$ is static vacuum on the interior of $M$.
\end{thm}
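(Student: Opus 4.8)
The plan is to prove the two assertions --- nonnegativity of the mass, and the characterization of the zero--mass case --- by playing the extremal property of $\l_0$ against the variational characterization of static metrics as critical points of the ADM mass. The common thread is that $\l_0 = \sup \Lambda$ forbids nonnegative scalar curvature fill-ins for any larger boundary mean curvature, whereas the Corvino--Schoen deformation theory says that, away from static metrics, the mass of an asymptotically flat extension can always be pushed down while the boundary data are held fixed. I will show these two principles are incompatible unless the mass is already nonnegative, and that the minimizing (zero--mass) configuration is forced to be static vacuum.

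First I would establish $\m(g) \ge 0$ by contradiction. Suppose $\m(g) < 0$. The idea is to spend the negative mass as ``slack'' to manufacture a forbidden fill-in. Working in the asymptotically flat end, where $g$ is $C^2$-close to the spatial Schwarzschild metric of negative mass, I would cut the end off outside a large coordinate sphere $S_R$ and glue in a compact cap, smoothing the metric across $S_R$ so as to keep the scalar curvature nonnegative; the negative mass is precisely what makes such a mean-convex cap available. This produces a compact nonnegative scalar curvature manifold with boundary $(\Sigma, \gamma)$, and the remaining slack can then be expended in a boundary collar --- by a deformation that raises the mean curvature while preserving the induced metric $\gamma$ and the sign of the scalar curvature --- so as to realize boundary mean curvature $\l H$ for some $\l > \l_0$. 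Such a fill-in contradicts $\l_0 = \sup \Lambda$, establishing $\m(g) \ge 0$.

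Next, assuming $\m(g) = 0$, I would show that $g$ is static vacuum on $\iM$. By the nonnegativity just proved, $g$ is a global minimizer of the ADM mass over all asymptotically flat nonnegative scalar curvature extensions of $(\Sigma, \gamma, \l_0 H)$. Suppose, for contradiction, that $g$ is \emph{not} static on $\iM$, i.e.\ the operator in Definition \ref{def_static} has trivial kernel. Then, by the solvability theory of \cite{Corvino}, the linearized scalar curvature operator is surjective onto functions supported in the interior, giving full freedom to prescribe compactly supported scalar--curvature corrections. Combining this with the first--variation formula for the ADM mass --- in which, because $\gamma$ and the boundary mean curvature are fixed, the boundary terms drop out and the variation pairs against $\Ric(g)$, $\Hess$ and $\Delta_g$ acting on a lapse tending to $1$ at infinity --- I would construct a one--parameter family $g_t$ of extensions of the \emph{same} data $(\Sigma, \gamma, \l_0 H)$, with scalar curvature kept $\ge 0$ (any first--order deficit corrected by a compactly supported solve) and with $\tfrac{d}{dt}\big|_{t=0}\m(g_t) < 0$. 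For small $t>0$ this produces an admissible extension of negative mass, contradicting the previous step. Hence the static operator has a nontrivial kernel; and, since $g$ simultaneously minimizes the mass and has nonnegative scalar curvature, the same deformation calculus (lowering the mass unless the interior scalar curvature vanishes) forces $R \equiv 0$, so that $g$ is static vacuum.

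The main obstacle is the capping construction in the nonnegativity step: closing off the asymptotically flat end into a compact nonnegative scalar curvature cap, and then converting the negative mass into a genuine \emph{increase} of the boundary mean curvature past $\l_0 H$ while preserving both the induced metric $\gamma$ and the nonnegativity of the scalar curvature. Controlling the scalar curvature through the gluing across $S_R$ and through the boundary collar deformation, uniformly enough to guarantee $\l > \l_0$, is the delicate point. The rigidity step, by contrast, is a comparatively standard application of the deformation machinery of \cite{Corvino} and the variational characterization of static metrics (cf.\ \cite{Bartnik89}) once the lower bound $\m(g)\ge 0$ is in hand.
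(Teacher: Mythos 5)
Your nonnegativity step has a genuine gap, and in fact its overall scheme cannot work. The construction you propose --- cap off the end beyond $S_R$ with nonnegative scalar curvature, then raise the boundary mean curvature in a collar ``using the remaining slack'' --- has no mechanism behind it: the ADM mass is an invariant of the discarded end, so after capping, the compact piece retains no trace of $\m(g)<0$, and a collar deformation that raises $H$ while fixing $\gamma$ and keeping $R\ge 0$ cannot exist in general (applied to an arbitrary fill-in it would make $\sup\Lambda=\infty$, contradicting Theorem \ref{thm-ShiTam}). More decisively: your argument uses only (a) that $(M,g)$ is a negative-mass nonnegative scalar curvature extension of the given data, and (b) that no fill-ins exist when the mean curvature is pushed above it. But both (a) and (b) hold for the data $(\Sigma,\gamma,\mu H)$ with $\mu>\mu_0$: no fill-ins exist for $\l\mu H$ with any $\l\ge 1$ (Theorem \ref{thm-l-mu-0}(iii)), and yet its Shi--Tam extension has strictly negative mass by \eqref{eq-mu-zero}. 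So any proof along your lines would prove a false statement. A correct proof must use the other half of the definition of $\l_0$ --- that fill-ins of $(\Sigma,\gamma,\l_k H)$ \emph{exist} for $\l_k<\l_0$ --- which your proposal never invokes. That is exactly what the paper does: it conformally deforms $(M,g)$ by the harmonic function $u_\e$ tending to $1-\e$ at infinity, so that the mass stays negative while the boundary mean curvature drops strictly below $\l_0 H$; it then glues on the fill-ins $(\Omega_k,g_k)$ of $(\Sigma,\gamma,\l_k H)$, $\l_k\uparrow\l_0$, and applies the corner positive mass theorem of Miao/Shi--Tam to reach a contradiction.

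Your rigidity step also does not close. Compactly supported perturbations of an asymptotically flat metric do not change the ADM mass at all, so ``compactly supported scalar-curvature corrections'' fed into a first-variation formula can never produce $\frac{d}{dt}\big|_{t=0}\m(g_t)<0$; conversely, the deformations that genuinely lower the mass (conformal descent to zero scalar curvature) necessarily lower the boundary mean curvature as well, so the deformed metric is no longer an extension of $(\Sigma,\gamma,\l_0 H)$ and cannot contradict the first step directly. The paper resolves this by combining the two effects and then re-running the gluing argument: Corvino's theorem pushes $R$ up on a small interior domain $U$ (leaving the mass and the boundary data unchanged), the conformal solve $\hat g=w^4\tg$ then restores $R\equiv 0$ with strictly smaller (hence negative) mass and strictly smaller boundary mean curvature, and the contradiction again comes from attaching the fill-ins for $\l_k<\l_0$ and invoking the corner positive mass theorem --- not from an appeal to the already-proved nonnegativity. (Also, the final implication that the zero-mass case has $R\equiv 0$ is immediate in the paper: a static asymptotically flat metric has constant, hence vanishing, scalar curvature; no mass-minimization calculus is needed there.)
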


\begin{proof}  If the statement fails, then $\m(g) =0$ and $g$ is not static vacuum on $\iM$ (the interior of $M$),
or else $\m(g) < 0$.  We address these two cases separately.  $\m(\cdot)$ will denote the total mass.

First, suppose $\m(g) = 0$ and $ g $ is not static vacuum on $\iM$.  Then $g$ is not static on $\iM$, since static metrics have constant
scalar curvature (\cite[Proposition 2.3]{Corvino}) and $g$ is asymptotically flat.
Then there exists a domain
$U \subset \subset \iM $ such that $ g$ is not static on $U$
(see the proof of Proposition 3.2 in \cite{Corvino}).
Applying Theorem 4 in \cite{Corvino}, one can  push the scalar curvature up slightly on $U$ while keeping
the metric $g$ unchanged outside $\overline{U}$. In other words, there exists another metric
$\tg$ on $M$ such that
$\tg$ agrees with $g$ outside $\overline{U}$,
the scalar curvature of $\tg$, denoted by $R(\tg)$, is nonnegative everywhere,
and $ R(\tg) $ is strictly positive somewhere in $U$.  Observe $\m(\tg) = \m(g)$.

On $(M, \tg)$, let $ w $ be the unique positive solution to
$$
\left\{
\begin{array}{rl}
   L_{\tg} w  & =  0 \ \ \text{ in } M, \\
   w & =  1 \  \ \text { on } \Sigma,\\
   w  & \to  1 \  \ \text { at infinity,}
\end{array}
\right.
$$
where $ L _{\tg} = -8\Delta_{\tg} + R(\tg)$ is the conformal Laplacian.
 The conformally deformed metric $ \hat{g} = w^4 \tg $ has zero scalar curvature.
 The total masses $ \m (\hat{g})$ and $\m  (\tg)$ are related by
 $  \m  (\hat{g}) =  \m  (\tg)  + 2 A $,
 where $A$ is the constant in the asymptotic expansion
 $ w = 1 + \frac{A}{| x|} + O ( | x |^{-2} ) $, $ |x| \rightarrow \infty$, on $M$.
Since $ R(\tg) \ge 0 $  and $ R(\tg)$ is not identically zero, we have $ A< 0$ by the
strong maximum principle.  Hence,
\be \label{eq-mass-relation}
  \m  (\hat{g}) < \m  (\tg) = \m(g) =0 .
  \ee
  On the other hand, the mean curvature  $\hat{H}$, $\tilde{H} $ of $\Sigma $ in $(M, \hat{g})$, $(M, \tg)$
  are related by $ \hat{H} = \tH + 4 \frac{\p w}{\p \nu} $,
where $\nu$ is the $\tg$-unit normal to $ \Sigma$ pointing into $M$ and   $  \frac{\p w}{\p \nu}  < 0 $ by the strong maximum principle.
 Therefore,
\be \label{eq-mean-H}
\hat{H} < \tH .
\ee

Now take a   sequence $\{ \l_k \}$ such that $ \l_k < \l_0$,
$ \lim_{k \rightarrow \infty} \l_k = \l_0$
and note $ (\Sigma, \gamma, \l_k H )$ admits a nonnegative scalar curvature fill-in $(\Omega_k , g_k)$ for each $k$.
Because $ \tg $ and $ g $ agree outside $ \overline{U}$, we have $ \tH = \l_0 H $.
By \eqref{eq-mean-H}, we have
\be \label{eq-bdry-H}
\l_k H >  \hat{H}
\ee
if $k$ is sufficiently  large.
For such a $k$, we attach $(\Omega, g_k)$ to $(M,  \hat{g})$  along $ \Sigma $.
Applying  results  in \cite{Miao02, ShiTam02} (also see Remark \ref{rmk-ShiTam}), we  have
\be
\m  (\hat{g}) \ge 0 ,
\ee
which contradicts \eqref{eq-mass-relation}.  This completes the first part of the proof.

To handle the remaining case, suppose that $ \m(g) < 0$.
For a number $\epsilon \in (0,1)$, let $u_{\epsilon}$ be the unique, positive solution to:
$$
\left\{
\begin{array}{rl}
   \Delta_g u_{\epsilon} & =  0 \ \ \text{ in } M,\\
   u_{\epsilon} & =  1 \  \ \text { on } \Sigma,\\
   u_{\epsilon} & \to  1-\epsilon  \  \ \text { at infinity}.
\end{array}
\right.
$$
The conformal  metric ${g}_\e = u_{\epsilon}^4 g$ has nonnegative scalar curvature.
The total masses $ \m ({g_\e})$ and $\m  (g)$ satisfy
 $  \m  ({g_\e}) =  \m  (g)  + 2 (1 - \epsilon) A_\epsilon$,
 where $A_\epsilon $ is  in the  expansion
 $ u_\epsilon = (1- \epsilon)  + \frac{A_\epsilon}{| x|} + O ( | x |^{-2} ) $, $ |x| \rightarrow \infty$.
 Since $ \Delta_g u  = 0 $,
 $  A_\epsilon =  -\frac{1}{4 \pi} \int_\Sigma \frac{\p u_\epsilon}{\p \nu} $
 where $\nu$ is the $g$-unit normal to $ \Sigma$ pointing into $M$.
 Hence, $ A_\epsilon > 0 $ by the strong maximum principle, and  $ \lim_{\epsilon \rightarrow 1 }  A_\epsilon = 0 $.
Therefore,
\be
\m ({g_\e} ) < 0
\ee
 for all $\epsilon$ sufficiently small.  Fixing such a value of $\epsilon$,
 $(M, g_\e) $ is a negative-mass extension of $(\Sigma, \gamma, \hat H)$,
 where $\hat H = H + 4 \frac{\p u_\epsilon}{\p \nu} < H$.  Repeating a similar argument as in
the first part of the proof yields a contradiction.
\end{proof}

\begin{remark}
The proof of Theorem  \ref{thm-static-extension-l-0}  establishes  the following:
Suppose  $\{ (\Sigma, \gamma, H_k) \}$ is a sequence of Bartnik data such that
$(\Sigma, \gamma, H_k)$ admits a nonnegative scalar curvature fill-in for each $k$ and $ \{ H_k \}$ converges to a smooth, positive
function $\tilde{H}$ in the $C^0$-norm. Then given any  positive function  $H$ satisfying $ H \le \tilde{H}$,
 the total mass of  an  asymptotically flat, nonnegative scalar curvature  extension of $(\Sigma, \gamma, H)$
is  nonnegative, and is zero only if the extension is static vacuum.
\end{remark}

\begin{cor}
Given Bartnik data $(\Sigma, \gamma, H)$, suppose $ \l_0 = \mu_0$.
Then
the asymptotically flat metric
$ g = u^2 d r^2 + g_r $,
where $ u $ is the unique positive  solution to \eqref{eq-pde}
with initial condition
$  H_0 / ( \mu_0 H)  $,
is static vacuum on $  \Sigma_0 \times (0, \infty)$.
\end{cor}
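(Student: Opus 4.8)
The plan is to recognize $g = u^2\,dr^2 + g_r$ as the canonical Shi--Tam extension of $(\Sigma, \gamma, \mu_0 H)$ whose total mass vanishes, and then to apply Theorem \ref{thm-static-extension-l-0} essentially verbatim. No new analysis is required beyond bookkeeping the boundary data.

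First I would collect the three relevant properties of $g$ from Section \ref{sec-mass}. By the geometric interpretation of \eqref{eq-pde}, the metric built from the solution $u$ with initial data $u_0 = H_0/(\mu_0 H)$ has zero scalar curvature and realizes $(\Sigma_0, g_0, H_0/u_0) = (\Sigma, \gamma, \mu_0 H)$ as its boundary data; in particular, $g$ is an extension of $(\Sigma, \gamma, \mu_0 H)$ in the sense of the definition preceding \eqref{eq-pde}. By fact b) in Section \ref{sec-mass}, $g$ is asymptotically flat with total mass equal to a fixed positive multiple of $\m(u_0)$. Finally, the defining property \eqref{eq-mu-zero} of $\mu_0$ gives $\m(u_0) = \m\bigl(H_0/(\mu_0 H)\bigr) = 0$, so the total mass of $(N, g)$ is zero.

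Next I would invoke the hypothesis $\lambda_0 = \mu_0$ to re-read the boundary data: the boundary mean curvature of $\Sigma$ in $(N, g)$ equals $\mu_0 H = \lambda_0 H$, so $g$ is an asymptotically flat extension of $(\Sigma, \gamma, \lambda_0 H)$ with nonnegative (in fact identically zero) scalar curvature and zero total mass. These are precisely the hypotheses of Theorem \ref{thm-static-extension-l-0}, whose conclusion in the zero-mass case forces $g$ to be static vacuum on the interior of $N$, which here is $\Sigma_0 \times (0, \infty)$. This completes the argument.

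Since the statement is a direct specialization of Theorem \ref{thm-static-extension-l-0} to this particular extension, there is no genuine analytic obstacle. The one point that needs care is the identification of the boundary data: one must check that the extension generated by the initial condition $u_0 = H_0/(\mu_0 H)$ indeed carries boundary mean curvature $\mu_0 H$, and that the equality $\lambda_0 = \mu_0$ then places it among the extensions of $(\Sigma, \gamma, \lambda_0 H)$ to which Theorem \ref{thm-static-extension-l-0} applies.
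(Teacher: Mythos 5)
Your proposal is correct and follows exactly the paper's own argument: the paper's proof of this corollary is precisely the observation that $\m(g) = 0$ (by the defining property \eqref{eq-mu-zero} of $\mu_0$) combined with an application of Theorem \ref{thm-static-extension-l-0}, using $\lambda_0 = \mu_0$ to identify $g$ as a zero-scalar-curvature, zero-mass extension of $(\Sigma, \gamma, \lambda_0 H)$. Your write-up simply makes explicit the bookkeeping (boundary data $H_0/u_0 = \mu_0 H$, asymptotic flatness, mass identification) that the paper leaves implicit.
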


\begin{proof}
This  follows directly from  the fact $\m ( g ) = 0 $ and Theorem \ref{thm-static-extension-l-0}.
\end{proof}

\end{document}